\newtheorem{thm}{Theorem}[section]
\newtheorem{lem}[thm]{Lemma}
\newtheorem{prop}[thm]{Proposition}
\newtheorem{coro}[thm]{Corollary}
\theoremstyle{definition}
\newtheorem{rem}[thm]{Remark}
\numberwithin{equation}{section}
\newcommand{\calf}{\mathcal{F}}
\newcommand{\bbn}{\mathbb{N}}
\newcommand{\bbz}{\mathbb{Z}}
\newcommand{\eps}{\varepsilon}
\DeclareMathOperator{\ess}{ess}
\begin{document}
	
	\baselineskip=17pt
	\title[A dynamical approach to nonhomogeneous spectra]{A dynamical approach to nonhomogeneous spectra}	
	\author[J. Li]{Jian Li}
	\address{Department of Mathematics, Shantou University, Shantou 515063, Guangdong, China}
	\email{lijian09@mail.ustc.edu.cn}
	\urladdr{https://orcid.org/0000-0002-8724-3050}
	\author[X. Liang]{Xianjuan Liang}
	\address{Department of Mathematics, Shantou University, Shantou 515063, Guangdong, China}
	\email{liangxianjuan@outlook.com}
	\urladdr{https://orcid.org/0000-0003-1970-3809}	
	\thanks{X. Liang is the corresponding author.}
	\date{}

\begin{abstract}
		Let $\alpha>0$ and $0<\gamma<1$. Define $g_{\alpha,\gamma}\colon \bbn\to\bbn_0$ by $g_{\alpha,\gamma}(n)=\lfloor n\alpha +\gamma\rfloor$, where $\lfloor x \rfloor$ is the 
		largest integer less than or equal to $x$.
		The set $g_{\alpha,\gamma}(\bbn)=\{g_{\alpha,\gamma}(n)\colon n\in\bbn\}$ 
		is called the $\gamma$-nonhomogeneous spectrum of $\alpha$.
		By extension, the functions $g_{\alpha,\gamma}$ are referred to as spectra.
		In 1996, Bergelson, Hindman and Kra showed that the functions $g_{\alpha,\gamma}$ preserve some largeness of subsets of $\bbn$, 
		that is, if a subset $A$ of $\bbn$ is an IP-set, a central set, an IP$^*$-set, or a central$^*$-set,
		then $g_{\alpha,\gamma}(A)$ is the corresponding object for all $\alpha>0$ and $0<\gamma<1$. 
		In 2012, Hindman and Johnson extended this result to include
		several other notions of largeness: C-sets, J-sets, strongly central sets, and
		piecewise syndetic sets.
		We adopt a dynamical approach to this issue and build a correspondence between the preservation of spectra and the lift property of suspension. As an application, we give a unified proof of some known results and also obtain some new results.  
\end{abstract}
	
	\subjclass[2020]{37B20, 37B05, 05D10} 
	
	\keywords{nonhomogeneous spectrum, Furstenberg family, essential $\calf$-set, strong $\calf$-proximality, suspension, Ramsey property}
	
	\maketitle
	
	\section{Introduction}
	Throughout this paper, $\bbn$, $\bbn_0$ and $\bbz$ denote the sets of positive integers, non-negative integers and integers, respectively.
	An interesting elementary result in number theory is as follows:
	for two positive real numbers $\alpha$ and $\beta$, $\{\lfloor n\alpha \rfloor\colon n\in\bbn\}$
	and $\{\lfloor n\beta\rfloor \colon n\in\bbn\}$
	are complementary sets in $\bbn$ if and only if $\alpha$ and $\beta$ are irrational and $\frac{1}{\alpha}+\frac{1}{\beta}=1$. 
	This result is frequently referred to as Beatty’s Theorem,
	as it was posed in 1926 by Samuel Beatty as a problem in the American Mathematical Monthly~{\cite{Beatty1926}}, but it may be due originally to John William Strutt (Lord
	Rayleigh)~{\cite{Strutt1877}}. 
	In \cite{Skolem1957-2}, Skolem introduced the more general sets 
	$\{\lfloor n\alpha+\gamma \rfloor \colon n\in\bbn\}$, determining when two such sets can be disjoint. In {\cite{Graham1978}}, Graham, Lin and Lin called the set $\{\lfloor n\alpha+\gamma \rfloor \colon n\in\bbn\}$ as the \emph{$\gamma$-nonhomogeneous spectrum of $\alpha$}.
	Normally, the parameter $\gamma$ ranges over all real numbers. For technical reasons,
	here we will restrict to $0<\gamma<1$.
	In {\cite{Hindman2012}}, by extension Hindman and Johnson referred to the function $g_{\alpha,\gamma}\colon \bbn\to\bbn_0$ defined by $g_{\alpha,\gamma}(n)=\lfloor n\alpha+\gamma\rfloor$  as a \emph{nonhomogeneous spectrum}.
	As in this paper we only consider subsets of positive integers, for convenience we regard $g_{\alpha,\gamma}$ as a function from $\bbn$ to $\bbn$, that is, we ignore the zero in the image of a set.
	
	In {\cite{Bergelson1996}}, Bergelson, Hindman and Kra used elementary, dynamical and algebraic approaches to study some largeness of subsets of $\bbn$ which are preserved by nonhomogeneous spectra. To be more precisely, they showed that 
	if a subset $A$ of $\bbn$ is an IP-set, a central set, a $\Delta$-set, an IP$^*$-set, a central$^*$-set, or  a $\Delta^*$-set,
	then $g_{\alpha,\gamma}(A)$ is the corresponding object. 
	They first proved the results for IP-sets, $\Delta$-sets, IP$^*$-sets and $\Delta^*$-sets by an elementary approach, for central sets by a dynamical approach and (re)proved all the results by an algebraic approach.
	An interesting consequence of this kind of results 
	provides explicit nontrivial examples of sets with these largeness properties. 
	In {\cite{Hindman2012}}, Hindman and Johnson further developed the algebraic approach and extended the results to include
	several other notions of largeness: C-sets, J-sets, strongly central sets, and
	piecewise syndetic sets. 
	
	In this paper, we will develop a modification of the dynamical approach in {\cite{Bergelson1996}} and build a correspondence between the preservation of spectra  and the lift property of suspension. As an application, we give a unified proof of some known results and also obtain some new.
	
	To state our result, we first need some preparations.
	By a \emph{(topological) dynamical system}, we mean a pair $(X,T)$,
	where $X$ is a compact metric space and $T\colon X\to X$ is continuous.
	If $T$ is a homeomorphism, we say the dynamical system $(X,T)$ is \emph{invertible}. 
	If $Y$ is a nonempty closed subset of $X$ with $T(Y)\subset Y$, then $(Y,T|_{Y})$ is called a \emph{subsystem}  of $(X,T)$. 
	A dynamical system $(X,T)$ is called \emph{minimal} if it contains no proper subsystems. Each point
	belonging to some minimal subsystem of $(X,T)$ is called a \emph{minimal point}. 
	By  Zorn’s Lemma, 
	every topological dynamical system has a minimal subsystem. 
	Points $x$ and $y$ of $X$ are \emph{proximal} if and only if there is an infinite sequence $\{n_i\}_{i=1}^\infty$ in $\bbn$ such
	that $\lim\limits_{i\to\infty} T^{n_i} x=\lim\limits_{i\to \infty} T^{n_i} y$.  
	For a point $x\in X$ and a subset $U$ of $X$, let 
	\[
	N(x,U)=\{n\in\bbn\colon T^n x\in U\}.
	\]
	A point $x$ of $X$ is called a \emph{recurrent point} of dynamical system $(X,T)$ if for any neighborhood $U$ of $x$, the recurrent time set $N(x,U)\neq \emptyset$. 
	It is easy to show that $x$ is  a recurrent point of $(X,T)$ if and only if for any neighborhood $U$ of $x$,  $N(x,U)$ is infinite. 
	We denote $R(X,T)$ be the collection of all recurrent points of dynamical system $(X,T)$. 
	It is not difficult to verify that $T(R(X,T))=R(X,T)$. 
	
	In {\cite{F81}} Furstenberg introduced the concept of central set in $\bbn$
	using notions from topological dynamics. 
	A subset $A$ of $\bbn$ is called a \emph{central set} 
	if there exist a dynamical system $(X,T)$, $x,y\in X$ and a neighborhood $U$ of $y$ such that $y$ is a minimal point, $x,y$ are proximal and $N(x,U)\subset A$. 
	To get a generalization of the central set, in {\cite{Li2012}}  
	the first author of this paper introduced the concepts of strong
	$\calf$-proximality and essential $\calf$-set. This will be explained in Section 3.
	
	We say that a non-empty collection $\calf$ of non-empty subsets of $\bbn$ 
	is a \emph{Furstenberg family} if it is 
	hereditary upwards, that is, $A\in\calf$ and $A\subset B\subset\bbn$ imply $B\in\calf$.
	For $n\in\bbz$ and $A,B\subset \bbn$, let $n+A=\{n+m\colon m\in A\}\cap \bbn$ and $B-A=\{m-n\colon m\in B, n\in A\}\cap \bbn$. 
	We say that a Furstenberg family $\calf$ is \emph{translation $+$ invariant} if for any $n\in\bbn$ and $A\in\calf$, $n+A\in\calf$; and has \emph{the Ramsey property} if for any $A\cup B\in \calf$, either $A\in\calf$ or $B\in\calf$.
	Let $(X,T)$ be a dynamical system and $x,y\in X$.
	We say that $x$ is \emph{$\calf$-strongly proximal to $y$} if for any neighborhood $U$ of $y$,  $N((x,y),U\times U)\in \calf$, where $(x,y)$ and $U\times U$ are considered in the product system $(X\times X,T\times T)$, where $T\times T(x,y)=(Tx,Ty)$ for any $x,y\in X$.
	A subset $A\subset \bbn$ is called an \emph{essential $\calf$-set}, 
	if there exist a dynamical system $(X,T)$, $x,y\in X$ and  a neighborhood $U$ of $y$ such that  $x$ is $\calf$-strongly proximal to $y$ and $N\big((x,y),U\times U\big)\subset A$.
	We also say $A$ is an essential $\calf$-set via $(X,T,x,y,U)$. 
	The collection of all essential $\calf$-sets is denoted by $\ess(\calf)$. 
	If $\calf$ is a Furstenberg family, then $\mathbb{N}\in \calf$ and $\ess(\calf)\subset \calf$. At this time, $\bbn$ is an essential $\calf$-set via $(X,T,x,x,U)$, where $T$ is the identity map on $X$. It means that $\bbn \in \ess(\calf)\neq\emptyset$. 
	By the definition of essential $\calf$-set, we know that  $\ess(\calf)$ is hereditary upwards, so $\ess(\calf)$ is also a Furstenberg family.  
	
	Let $(X,T)$ be an invertible dynamical system. 
	In the product space $X \times [0, 1]$, 
	identify $(x, 1)$ and $(T x, 0)$ for all $x \in X$, let
	$Y = X \times [0, 1)$ have the quotient topology resulting from this identification. 
	Since $X \times [0, 1]$ is a compact metrizable space, 
	then $Y$ is a compact metrizable space. 
	And for each $s \in \mathbb{R}$ the function $F_s : Y\to  Y$
	defined by $F_s(x,t)=\big(T^{\lfloor s+t\rfloor}x,s+t-\lfloor s+t\rfloor\big)$,  $F_s$ is continuous. 
	Then $(Y,F_s)$ is a dynamical system and  is called a \emph{suspension} of $(X,T)$. 
	It is not hard to prove that for any $n\in \bbz$, $s\in\mathbb{R}$ and $(x,t)\in Y$, $F_s^n(x,t)=\big(T^{\lfloor n s+t\rfloor}x,n s+t-\lfloor n s+t\rfloor\big)$. 
	
	One of the main results of this paper is the following correspondence principle between the preservation of spectra and the lift property of suspension. 
	
	\begin{thm}\label{thm:main-result-1}
		Let $\calf$ be a translation $+$ invariant Furstenberg family and $\alpha>0$.
		Assume that the collection  $\ess(\calf)$  has the Ramsey property. 
		Then the following two statements are equivalent:
		\begin{enumerate}
			\item for any $A\in \ess(\calf)$ and $0<\gamma<1$, $g_{\alpha,\gamma}(A)\in \ess(\calf)$;
			\item for any invertible dynamical system $(X,T)$ and any $x,y\in X$ with $x$ $\calf$-strongly proximal to $y$, there exists $0<\gamma_0<1$ such that $(x,\gamma_0)$ is $\calf$-strongly proximal to $(y,\gamma_0)$ for the suspension $(Y,F_{\alpha^{-1}})$ of $(X,T)$.
		\end{enumerate}
	\end{thm}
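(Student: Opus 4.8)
The plan is to build a single return-time dictionary between the base system $(X,T)$ and its suspension $(Y,F_{\alpha^{-1}})$, and then read the equivalence off it in both directions. Write $\{t\}=t-\lfloor t\rfloor$, let $\|s\|$ denote the distance from $s$ to the nearest integer, and let $\nu(n)$ be the integer nearest to $n/\alpha$. From the iteration formula $F_{\alpha^{-1}}^n(x,\gamma_0)=\big(T^{\lfloor n/\alpha+\gamma_0\rfloor}x,\{n/\alpha+\gamma_0\}\big)$ I would record two computations. First, for the coarse neighborhood $U\times[0,1)$ only the base coordinate is tested, giving
\[
N\big((x,\gamma_0),(y,\gamma_0);(U\times[0,1))\times(U\times[0,1))\big)=g_{1/\alpha,\gamma_0}^{-1}\big(N((x,y),U\times U)\big),
\]
and a Beatty-type computation shows that when $\alpha>1$ each fibre $g_{1/\alpha,\gamma_0}^{-1}(\{m\})$ is an interval of length about $\alpha$ which, for a suitably matched $\gamma=\gamma(\alpha,\gamma_0)$, contains $g_{\alpha,\gamma}(m)$; hence the coarse return is a superset of $g_{\alpha,\gamma}\big(N((x,y),U\times U)\big)$ (the reciprocal regime $\alpha<1$ is dual, and $\alpha=1$ is trivial). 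Second, for a fine neighborhood $V=U\times(\gamma_0-\eps,\gamma_0+\eps)$ with $\eps<\min(\gamma_0,1-\gamma_0)$ the height test $\{n/\alpha+\gamma_0\}\in(\gamma_0-\eps,\gamma_0+\eps)$ is exactly $\|n/\alpha\|<\eps$, and on this set $\lfloor n/\alpha+\gamma_0\rfloor=\nu(n)$, so
\[
N\big((x,\gamma_0),(y,\gamma_0);V\times V\big)=\big\{\,n\in\bbn:\ \|n/\alpha\|<\eps\ \text{and}\ \nu(n)\in N((x,y),U\times U)\,\big\}.
\]
The decisive elementary point is that, reindexing by $m=\nu(n)$, this fine return equals $\{g_{\alpha,\gamma}(m):m\in N((x,y),U\times U),\ \|\alpha m\|<\alpha\eps\}$ for \emph{every} $\gamma$ with $\alpha\eps<\min(\gamma,1-\gamma)$, since for such small $\eps$ the nearest integer to $\alpha m$ coincides with $g_{\alpha,\gamma}(m)$.

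For $(2)\Rightarrow(1)$ this dictionary gives an almost immediate argument. Let $A\in\ess(\calf)$ be witnessed by $(X,T,x,y,U_0)$; passing to a suitable invertible extension I may assume $(X,T)$ invertible while retaining the return set $B:=N((x,y),U_0\times U_0)\subseteq A$ and the $\calf$-strong proximality of $x,y$. By $(2)$ there is $\gamma_0$ with $(x,\gamma_0)$ $\calf$-strongly proximal to $(y,\gamma_0)$ in $(Y,F_{\alpha^{-1}})$. Given any $\gamma\in(0,1)$, choose $\eps$ so small that $\alpha\eps<\min(\gamma,1-\gamma)$; the fine-return identity then yields
\[
N\big((x,\gamma_0),(y,\gamma_0);V\times V\big)=\{g_{\alpha,\gamma}(m):m\in B,\ \|\alpha m\|<\alpha\eps\}\subseteq g_{\alpha,\gamma}(B)\subseteq g_{\alpha,\gamma}(A).
\]
Together with the $\calf$-strong proximality of $(x,\gamma_0)$ and $(y,\gamma_0)$, this exhibits $g_{\alpha,\gamma}(A)$ as an essential $\calf$-set via $(Y,F_{\alpha^{-1}},(x,\gamma_0),(y,\gamma_0),V)$. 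Since $\gamma$ is arbitrary, $(1)$ follows, and notably this direction uses neither the Ramsey property nor translation invariance.

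For $(1)\Rightarrow(2)$ the dictionary must be run backwards, and here the work is real. Given invertible $(X,T)$ with $x$ $\calf$-strongly proximal to $y$, each $B_U:=N((x,y),U\times U)$ lies in $\ess(\calf)$. The obstruction is that, by equidistribution of $\{\alpha m\}$, a fine return at height $\gamma_0$ retains only those base levels $m\in B_U$ whose phase $\|\alpha m\|$ is within $\alpha\eps$ of $0$; it is therefore genuinely smaller than the full image $g_{\alpha,\gamma}(B_U)$ and cannot be reached from $(1)$ by heredity alone. I would attack this by partitioning the phase circle $\mathbb{R}/\mathbb{Z}$ into finitely many small arcs, writing $B_U$ as the resulting finite disjoint union, and invoking the Ramsey property of $\ess(\calf)$ to extract one arc whose phase-restricted piece still lies in $\ess(\calf)$; using the minimality of the rotation by $\alpha$ together with the translation $+$ invariance of $\calf$, such a piece can be transported onto the arc around $0$, which is exactly the phase that a fine return sees, so that $(1)$ (applied with the matched $\gamma$) places the corresponding fine return in $\ess(\calf)\subseteq\calf$. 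A diagonal argument over a countable neighborhood basis $\{U_k\}$ at $y$, refining the arcs at each stage, then pins down one $\gamma_0$ for which $N\big((x,\gamma_0),(y,\gamma_0);(U_k\times I)\times(U_k\times I)\big)\in\calf$ for every $k$ and every arc $I$ of the nested sequence; as these products form a neighborhood basis at $(y,\gamma_0)$, this is precisely the desired $\calf$-strong proximality. (The coarse-return computation of the first paragraph, valid for $\alpha>1$ with its dual for $\alpha<1$, enters as the book-keeping that matches $\gamma$ to $\gamma_0$.)

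The step I expect to be the main obstacle is exactly this transport of an arbitrary Ramsey-produced phase arc onto the phase-$0$ arc seen by the suspension, carried out uniformly over all base neighborhoods $U_k$ by a single $\gamma_0$. It is here that the two standing hypotheses do their work: the translation $+$ invariance of $\calf$—coupled with the minimality of the rotation by $\alpha$, which realizes any phase as $\{\alpha m_0\}$ for a suitable $m_0$—is what lets an off-center arc be shifted to the center, while the Ramsey property of $\ess(\calf)$ is what guarantees that at least one arc of a finite phase partition carries an essential-$\calf$ share. I also expect the careful bookkeeping of the floor function across the identification $(x,1)\sim(Tx,0)$, in particular the clean separation of the regimes $\alpha>1$ and $\alpha<1$ in the coarse-return step, to require attention, though it becomes routine once the dictionary of the first paragraph is in place.
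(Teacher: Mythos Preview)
Your dictionary and the $(2)\Rightarrow(1)$ direction are essentially what the paper does (Proposition~3.4). One small correction: the reduction to an invertible system (Lemma~3.3) genuinely uses the translation $+$ invariance of $\calf$, so your parenthetical remark that this direction ``uses neither the Ramsey property nor translation invariance'' is not quite right.

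The gap is in $(1)\Rightarrow(2)$, at exactly the spot you flag. Your transport step does not work: if Ramsey selects the phase arc $A_j\subset B_U$ and you translate by some $m_0$ to push the phase toward $0$, the shifted set $m_0+A_j$ need not lie in $B_U=N((x,y),U\times U)$ at all, so it contributes nothing to the fine return $\{n:\|n/\alpha\|<\eps,\ \nu(n)\in B_U\}$. Moreover, ``minimality of the rotation by $\alpha$'' is unavailable for rational $\alpha$, and your own computation shows the fine-return phase condition $\|n/\alpha\|<\eps$ is independent of the height $\gamma_0$, so the diagonal selection of $\gamma_0$ cannot rescue the argument either.

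The paper's device replaces transport by a \emph{difference trick}. After Ramsey produces $A_j\in\ess(\calf)$, one invokes Lemma~3.5 to get $(A_j-A_j)\cap A_j\in\ess(\calf)$. The point is purely arithmetic: if $m_1,m_2\in A_j$ both have $n_i/\alpha-m_i$ in the same arc of width $1/L<\eps$, then $(n_1-n_2)/\alpha-(m_1-m_2)\in(-\eps,\eps)$, so every element of $(A_j-A_j)\cap A_j$ automatically has phase within $\eps$ of $0$ \emph{and} remains in $B_U$. Hence $B:=\{m\in B_U:\|m\alpha\|<\alpha\eps\}\in\ess(\calf)$, and applying $(1)$ with parameter $\eps\alpha$ gives $g_{\alpha,\eps\alpha}(B)\in\ess(\calf)\subset\calf$; this set is contained in the fine return. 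Because the difference trick lands in the $0$-arc regardless of which $j$ Ramsey selects, the argument actually yields the stronger conclusion that \emph{every} $\gamma_0\in(0,1)$ works (Proposition~3.5), with no diagonalisation needed.
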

	
	We will show that the statement (1) in Theorem~\ref{thm:main-result-1} holds under some conditions only depending on the family $\calf$, which leads to the following combinatorial result.
	
	\begin{thm}\label{thm:main-result-2}
		Let $\calf$ be a translation $+$ invariant Furstenberg family with the Ramsey property and $\alpha>0$. 
		If  for any  $A\in\calf$ and $0<\gamma<1$, $g_{\alpha,\gamma}(A)\in \calf$. 
		Then  
		for any $A\in\ess(\calf)$ and $0<\gamma<1$,
		$g_{\alpha,\gamma}(A)\in \ess(\calf)$.
	\end{thm}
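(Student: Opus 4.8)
The plan is to read off Theorem~\ref{thm:main-result-2} from the equivalence in Theorem~\ref{thm:main-result-1}. The conclusion we want is exactly statement (1) there, so it suffices to verify statement (2): that $\calf$-strong proximality always lifts to the suspension $(Y,F_{\alpha^{-1}})$ for a suitable height $\gamma_0$. To be entitled to invoke Theorem~\ref{thm:main-result-1} I would first record that $\ess(\calf)$ inherits the Ramsey property from $\calf$, a structural property of essential $\calf$-sets; granting this, proving statement (2) under the preservation hypothesis finishes the proof.

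So fix an invertible system $(X,T)$ and $x,y\in X$ with $x$ $\calf$-strongly proximal to $y$, fix a neighborhood $V$ of $y$, and write $M_V=N((x,y),V\times V)\in\calf$ and $\{r\}=r-\lfloor r\rfloor$. Using $F_{\alpha^{-1}}^n(x,t)=(T^{\lfloor n\alpha^{-1}+t\rfloor}x,\{n\alpha^{-1}+t\})$, the return set of $((x,\gamma_0),(y,\gamma_0))$ to a box $(V\times J)\times(V\times J)$ is governed by a base condition $\lfloor n\alpha^{-1}+\gamma_0\rfloor\in M_V$ together with a fibre condition $\{n\alpha^{-1}+\gamma_0\}\in J$. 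The first step is to feed the preservation hypothesis into the base condition: for a suitable parameter $\gamma'$ (depending on $\alpha$ and $\gamma_0$, where the floor bookkeeping is delicate and splits along the size of $\alpha$) one checks the inclusion $g_{\alpha,\gamma'}(M_V)\subseteq\{n\in\bbn:\lfloor n\alpha^{-1}+\gamma_0\rfloor\in M_V\}$. Since $g_{\alpha,\gamma'}(M_V)\in\calf$ by hypothesis and $\calf$ is hereditary upwards, the full-fibre return set $P:=\{n:\lfloor n\alpha^{-1}+\gamma_0\rfloor\in M_V\}$ lies in $\calf$; translation invariance is what reconciles the various parameters $\gamma'$ forced on us as $\gamma_0$ and the box vary.

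The main obstacle is the fibre condition for small arcs $J$, where the surviving return times are confined to the sparse set $\{n:\{n\alpha^{-1}\}\approx 0\}$ and preservation cannot be applied directly. My plan here is a nested Ramsey selection. Partition the circle into finitely many short arcs; then $P$ is the disjoint union of the corresponding windowed return sets, so by the Ramsey property one of them lies in $\calf$. Refining the chosen arc and applying the Ramsey property again to that (now smaller) return set, and iterating over a neighborhood basis of $y$ in the base as well, produces a decreasing sequence of boxes whose windowed return sets all lie in $\calf$; their heights intersect in a single $\gamma_0$. Every neighborhood of $(y,\gamma_0)$ then contains one of these boxes, so its return set is in $\calf$ by heredity. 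This is precisely $\calf$-strong proximality of $(x,\gamma_0)$ to $(y,\gamma_0)$, i.e.\ statement (2) of Theorem~\ref{thm:main-result-1}, and the equivalence there delivers statement (1), as required. The two points needing care are the exact floor computation behind the spectrum inclusion and the compatibility of the Ramsey selections as the boxes shrink.
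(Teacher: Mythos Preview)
Your high-level plan matches the paper: reduce to statement~(2) of Theorem~\ref{thm:main-result-1}, after recording that $\ess(\calf)$ inherits the Ramsey property (this is Corollary~\ref{cor,Ramsey} in the paper, obtained via the idempotent characterization). The divergence is in how you verify~(2).

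Your nested Ramsey selection has a real gap at exactly the point you flag. The difficulty is not the arc refinement alone but its interaction with shrinking the base neighborhood. At stage $k$ you have $V_k$, an arc $J_k$, and a windowed return set $R(V_k,J_k)\in\calf$. To pass to stage $k+1$ you must shrink to $V_{k+1}\subsetneq V_k$ and then partition $J_k$ into sub-arcs and apply Ramsey. But Ramsey only helps if the \emph{parent} set $R(V_{k+1},J_k)$ is already in $\calf$, and nothing you have established gives this: your first step provides $R(V_{k+1},[0,1))\in\calf$, not $R(V_{k+1},J_k)$. Shrinking $V$ can shift where in the circle the $\calf$-large piece lives, so the arcs chosen for different $V_k$ need not nest, and no single $\gamma_0$ emerges. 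An ultrafilter refinement does not obviously rescue this either, since you only know $P_{V}\in\calf$ for each $V$, not that all $P_V$ lie in one fixed ultrafilter.

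The paper sidesteps this entirely. In Proposition~\ref{prop:iter-inv=>prox-lift} it fixes an arbitrary $\gamma$ and an arbitrary neighborhood $W$ of $(y,\gamma)$, and shows the single return set lies in $\calf$ with no cross-scale compatibility needed. The key device is Lemma~\ref{lem,A-A}: after a single Ramsey split of $A=N((x,y),U\times U)$ into pieces $A_i$ according to the position of $n/\alpha$ modulo $1$, one passes to $(A_j-A_j)\cap A_j$, which is still in $\ess(\calf)$ and, crucially, consists of $m$ for which some $n/\alpha$ is within $\eps$ of $m$ \emph{regardless of which $j$ was selected}. This yields the set $B$, then $D=\{n:\exists\,m\in A,\ |n/\alpha-m|<\eps\}\in\calf$ via a single application of the spectrum hypothesis, and $D$ sits inside the desired return set. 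So the paper actually proves that every $\gamma\in(0,1)$ works, which is stronger than the existential statement~(2), and does so one neighborhood at a time. The missing idea in your argument is precisely this difference trick.
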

	
	We have the following consequence of Theorem~\ref{thm:main-result-2}, which gives a unified proof on the preservation of some largeness of subsets of $\bbn$. 
	
	\begin{coro}\label{cor:IP-D-quasi-central-spectra}
		If a subset $A$ of $\bbn$ is an IP-set, a C-set, a D-set or quasi-central set,
		then so is $g_{\alpha,\gamma}(A)$ for all $\alpha>0$ and $0<\gamma<1$.
	\end{coro}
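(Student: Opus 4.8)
The plan is to derive every case from Theorem~\ref{thm:main-result-2}. For each of the four notions I would produce a translation $+$ invariant Furstenberg family $\calf$ with the Ramsey property such that the notion coincides with $\ess(\calf)$ and such that the family-level statement ``$g_{\alpha,\gamma}(A)\in\calf$ whenever $A\in\calf$'' is already in hand. Using the characterizations of Section~3, the natural choices are: the family $\calf_{\mathrm{inf}}$ of infinite subsets of $\bbn$ for IP-sets, the family $\calf_{\mathrm{ps}}$ of piecewise syndetic sets for quasi-central sets, the family $\calf_{\mathrm{J}}$ of J-sets for C-sets, and the family $\calf_{\mathrm{bd}}$ of sets of positive upper Banach density for D-sets. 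Once each notion is written as $\ess(\calf)$ for such a family, it remains only to verify the two hypotheses of Theorem~\ref{thm:main-result-2}.

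The structural hypotheses I expect to be routine. All four families are plainly hereditary upwards and translation $+$ invariant, since none of ``infinite'', ``piecewise syndetic'', ``J-set'' or ``positive upper Banach density'' is affected by a shift. The Ramsey property holds in each case: trivially for $\calf_{\mathrm{inf}}$, by the known partition regularity of piecewise syndetic sets and of J-sets for $\calf_{\mathrm{ps}}$ and $\calf_{\mathrm{J}}$, and by subadditivity of upper Banach density for $\calf_{\mathrm{bd}}$. For the family-level preservation I would argue as follows. For $\calf_{\mathrm{inf}}$ it is immediate, as $g_{\alpha,\gamma}$ is non-decreasing with $g_{\alpha,\gamma}(n)\to\infty$ and hence sends infinite sets to infinite sets. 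For $\calf_{\mathrm{ps}}$ and $\calf_{\mathrm{J}}$ I would simply quote the preservation results of Hindman and Johnson~\cite{Hindman2012}. For $\calf_{\mathrm{bd}}$ I would give a direct density estimate: $g_{\alpha,\gamma}$ is at most $\lceil 1/\alpha\rceil$-to-one and carries an interval of length $L$ into an interval of length roughly $\alpha L$, so on a window realizing the upper Banach density of $A$ the image $g_{\alpha,\gamma}(A)$ occupies a positive proportion of a window of comparable length; letting the windows grow shows the upper Banach density of $g_{\alpha,\gamma}(A)$ is bounded below by a positive multiple of that of $A$.

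Granting these inputs, Theorem~\ref{thm:main-result-2} applies verbatim to each family and yields $g_{\alpha,\gamma}(A)\in\ess(\calf)$ for every $A\in\ess(\calf)$ and all $0<\gamma<1$; reading this back through the Section~3 characterizations gives precisely the preservation of IP-sets, quasi-central sets, C-sets and D-sets claimed. I do not expect the invocation of Theorem~\ref{thm:main-result-2} to be the difficulty; the real work lies in its two inputs. The harder of these is establishing the exact identifications ``notion $=\ess(\calf)$'' of Section~3, and in particular the inclusion $\ess(\calf)\subseteq$ (the named notion), since it is this direction that licenses transporting the conclusion back to the combinatorial statement. The density preservation for $\calf_{\mathrm{bd}}$ is the other delicate point, where the floor function and the shift by $\gamma$ must be handled uniformly across the density-realizing windows; this case also yields the genuinely new instance, as the preservation of D-sets is not covered by the earlier elementary or algebraic arguments.
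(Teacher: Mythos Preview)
Your proposal is correct and follows essentially the same route as the paper: identify each of the four notions with $\ess(\calf)$ for $\calf=\calf_{\mathrm{inf}},\calf_{\mathrm{J}},\calf_{\mathrm{pubd}},\calf_{\mathrm{ps}}$ via Theorem~\ref{thm:ess-set-idempotent}, check that each such $\calf$ is translation $+$ invariant with the Ramsey property and is preserved by $g_{\alpha,\gamma}$ (the latter being immediate for $\calf_{\mathrm{inf}}$ and $\calf_{\mathrm{pubd}}$ and quoted from \cite{Hindman2012} for $\calf_{\mathrm{ps}}$ and $\calf_{\mathrm{J}}$), and then apply Theorem~\ref{thm:main-result-2}. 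The only cosmetic difference is that the paper packages the intermediate step as Corollary~\ref{cor:ess-F-spectra-inv} and records the four identifications in the subsequent remark, whereas you fold everything into a single argument.
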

	
	The paper is organized as follows: 
	In Section 2, we introduce some notations and establish some conventions to be used in the paper. We introduce some largeness of subsets of $\bbn$ and state the known dynamical or algebraic characterizations of them. 
	In Section 3, we build a correspondence between the preservation of spectra and the lift property of suspension  to prove 
	Theorem \ref{thm:main-result-1} and Theorem \ref{thm:main-result-2}. As an application, we give a unified proof of some known results and also obtain some new results.

\section{Preliminaries}
	
	We first introduce some largeness of subsets of $\bbn$. 
	As an application of the main results of this paper, we will obtain some properties about these sets in the next section. 
	Let $A$ be a subset of $\bbn$.
	We say that $A$ is 
	\begin{enumerate}
		\item an \emph{IP-set} if there exists a sequence $\{p_n\}_{n=1}^\infty$ in $\bbn$  such that for any non-empty finite subset $F$ of $\bbn$, $\sum_{n\in F} p_n\in A$;
		\item an \emph{AP-set} if it contains arbitrarily long arithmetic progressions,
		that is, for every $m\in\bbn$ there exist $a,b\in\bbn$ such that $a,a+b,\dotsc,a+(m-1)b\in A$;
		\item a \emph{J-set} if for every finite collection $\big\{\{p_n^i\}_{n=1}^\infty\colon i=1,2,\dotsc,m\big\}$ of sequences in $\bbn$ there exist $r\in\bbn$ and a non-empty finite subset $F$ of $\bbn$
		such that $r+\sum_{n\in F}p_n^i \in A $ for $i=1,2,\dotsc,m$;
		\item \emph{piecewise syndetic} if there exists $\ell>0$ such that $\bigcup_{i=0}^\ell (A-i)$ contains
		arbitrarily long integer intervals. That is, for any $m\in\bbn$ there exists $n\in\bbn$ 
		with $n,n+1,\dotsc,n+m\in \bigcup_{i=0}^\ell (A-i)$.
	\end{enumerate}
	The \emph{upper density} of $A$ is defined by
	\[
	d^*(A)=\limsup_{n\to\infty}\frac{|A\cap [1,n]|}{n},
	\]
	where $|\cdot |$ denotes the cardinality of a set,
	and the \emph{upper Banach  density} of $A$ is  defined by
	\[
	BD^*(A)=\limsup_{(n-m)\to\infty}\frac{|A\cap [m,n]|}{n-m+1}.
	\]
	
	Let $\calf_{\textrm{inf}}$ (resp. $\calf_{\textrm{ap}}$,
	$\calf_{\textrm{J}}$, $\calf_{\textrm{ps}}$, $\calf_{\textrm{pud}}$, $\calf_{\textrm{pubd}}$) be the collection of all infinite subsets $\bbn$ 
	(resp. AP-sets, J-sets, piecewise syndetic sets, sets with positive upper density,
	sets with positive upper Banach density).
	It is clear that all those collections are Furstenberg families and translation $+$ invariant.
	It is easy to verify that $\calf_{\textrm{inf}}$, $\calf_{\textrm{pud}}$ and $\calf_{\textrm{pubd}}$
	have the Ramsey property. By the well-known van de Waerden Theorem, $\calf_{\textrm{ap}}$ has the Ramsey property. 
	It is also well known that $\calf_{\textrm{ps}}$ has the Ramsey property, see e.g. {\cite[Theorem 1.24]{F81}}.
	By {\cite[Theorem 2.14]{HS2010}}, $\calf_{\textrm{J}}$ has the Ramsey property.
	It is easy to verify that for $\calf = \calf_{\textrm{inf}}$, $\calf_{\textrm{pud}}$ or $\calf_{\textrm{pubd}}$ and for any $\alpha>0$ and $0<\gamma<1$, $A\in \calf$ implies
	$g_{\alpha,\gamma}(A)\in\calf$. The results for $\calf = \calf_{\textrm{ps}}$, $\calf_{\textrm{ap}}$ or $\calf_{\textrm{J}}$ were proved in Theorems 2.4, 2.5 and 4.6 of {\cite{Hindman2012}}, respectively.

	Now we introduce some basic facts about the Stone-\v{C}ech compactification $\beta\bbn$ of $\bbn$.
	We refer the reader to {\cite{HS11}} for more details.
	A Furstenberg family $\calf$ is called a \emph{filter} if for every $A_1,A_2\in\calf$, $A_1\cap A_2\in\calf$, and an \emph{ultrafilter} if it is a filter with the Ramsey property.
	For each $n\in\bbn$, it is easy to verify that $e(n)=\{A\subset\bbn\colon n\in A\}$
	is an ultrafilter. Let $\beta\bbn$ be the collection of all ultrafilters on $\bbn$.
	Endow $\bbn$ with the discrete topology.
	For every subset $A$ of $\bbn$, let $\widehat{A}=\{p\in\beta\bbn\colon A\in p\}$.
	The collection $\{\widehat{A}\colon A\subset\bbn\}$ forms a basis of a topology on $\beta\bbn$. With this topology, $\beta\bbn$ is compact and Hausdorff.
	Then the embedding $e\colon \bbn\to \beta\bbn$, $n\mapsto e(n)$, is a Stone-\v{C}ech compactification of $\bbn$.
	
	Since $(\bbn,+)$ is a semigroup, we extend the operation $+$ to $\beta\bbn$
	so that $(\beta\bbn,+)$ is a compact Hausdorff right topological semigroup, which has rich algebraic structures.
	An \emph{idempotent} $p\in\beta\bbn$ is an element satisfying $p+p=p$. A subset $L$ of $\beta\bbn$ is called a \emph{left ideal} of $\beta\bbn$ if $\beta\bbn + L\subset L$.
	A \emph{minimal left ideal} is a left ideal that does not
	contain any proper left ideal. 
	An idempotent in $\beta\bbn$ is called a \emph{minimal idempotent} if
	it is contained in some minimal left ideal of $\beta\bbn$. 
	A subset $A$ of $\bbn$ is an IP-set if and only if there exists an idempotent $p$ in $\beta\bbn$ with $A\in p$, see e.g.~ {\cite[Theorem 5.12]{HS11}}.
	In {\cite{BH90}} Bergelson and Hindman obtained a simpler characterization of central sets in terms of the algebra of $\beta\bbn$, that is,
	a subset $A$ of $\bbn$ is central if and only if there exists a 
	minimal idempotent $p$ in $\beta\bbn$ with $A\in p$. 
	
	In {\cite{HM1996}} Hindman, Maleki and Strauss introduced the concept
	of quasi-central sets algebraically, which was dynamically characterized by 
	Burns and Hindman in {\cite[Theorem 3.4]{BH2007}}. 
	In {\cite{BD2008}} Bergleson and Downarowicz introduced the notion of D-sets and obtained some dynamical characterizations of D-set. 
	The authors in {\cite{HS2009}} called C-sets the sets satisfying the conclusion of the strong Central Sets Theorem, and obtained an algebraic characterization of the C-sets.
	In {\cite{Li2012}} the first author of this paper obtained a dynamical characterization of C-sets in $\bbn$. See also {\cite{J2018}} for a semigroup version.
	Here we uniformly define these sets algebraically. 
	A subset $A$ of $\bbn$ is called a quasi-central set (resp. D-set, C-set)
	if there exists an idempotent $p\subset \calf_{\textrm{ps}}$ 
	(resp. $p\subset \calf_{\textrm{pubd}}$ , $p\subset \calf_{\textrm{J}}$)
	with $A\in p$.

\section{Proof of the main results}
	
	The aim of this section is to prove Theorems~\ref{thm:main-result-1} and~\ref{thm:main-result-2}. 
	The key technique in the proofs is that we build a correspondence between the preservation of spectra and the lift
	property of suspension. 
	By the definition of suspension, we note that when we talk about the suspension of a dynamical system, the premise is that the dynamical system is invertible. 
	Thus first we will show that each essential $\calf$-set can be generated by the return time set of an invertible dynamical  system.

	\begin{lem}\label{lem:ess-f-set-onto}
		Let $\calf$ be a Furstenberg family.
		If $A$ is an essential $\calf$-set, 
		then there exist $X, T, x,y, U$ such that $T$ maps $X$ onto $X$ and $A$ is an essential $\calf$-set via $(X,T,x,y,U)$. 
	\end{lem}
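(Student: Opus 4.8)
The plan is to realize $A$ inside the one-sided full shift over $X$, where surjectivity is automatic, and to transport the base points along the orbit map. Let $\widetilde X=X^{\bbn_0}$ carry the product topology; since $X$ is compact metric, $\widetilde X$ is again compact and metrizable. Let $\sigma\colon\widetilde X\to\widetilde X$ be the shift $\sigma\big((\xi_n)_{n\ge 0}\big)=(\xi_{n+1})_{n\ge 0}$, which is continuous and visibly onto, since any $\eta\in\widetilde X$ has a preimage obtained by prepending any coordinate. Define the orbit map $\Phi\colon X\to\widetilde X$ by $\Phi(w)=(T^n w)_{n\ge 0}$; it is continuous, injective, and intertwines the dynamics, $\sigma\circ\Phi=\Phi\circ T$, so that $\sigma^n\circ\Phi=\Phi\circ T^n$ and in particular the zeroth coordinate of $\sigma^n\Phi(w)$ equals $T^n w$.

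I would then set $\widetilde x=\Phi(x)$, $\widetilde y=\Phi(y)$, and take the cylinder $\widetilde U=\{\xi\in\widetilde X\colon \xi_0\in U\}$, which is an open neighborhood of $\widetilde y$ because $(\Phi y)_0=y\in U$. The return-time set is read off from the zeroth coordinate: since $\sigma^n\widetilde x\in\widetilde U$ iff $T^n x\in U$ and likewise for $\widetilde y$, one gets $N\big((\widetilde x,\widetilde y),\widetilde U\times\widetilde U\big)=\{n\in\bbn\colon T^n x\in U,\ T^n y\in U\}=N\big((x,y),U\times U\big)\subseteq A$. Thus the return set into $\widetilde U$ is literally the original one, so it stays inside $A$.

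The only substantive point is to verify that $\widetilde x$ is $\calf$-strongly proximal to $\widetilde y$, and this is where I expect the main obstacle: a neighborhood of $\widetilde y$ in the product topology constrains finitely many coordinates, so closeness of $\sigma^n\widetilde x$ to $\widetilde y$ demands $T^{n+i}x\approx T^i y$ for $0\le i\le k$, which is far stronger than the single-time closeness $T^n x\approx y$ supplied by the hypothesis. The way around it is uniform continuity. Given a basic neighborhood $V=\{\xi\colon d(\xi_i,T^i y)<\eps,\ 0\le i\le k\}$ of $\widetilde y$, choose $\delta>0$ so that $d(a,b)<\delta$ forces $d(T^i a,T^i b)<\eps$ for all $i\le k$, which is possible as $T,\dots,T^k$ are uniformly continuous on the compact space $X$. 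If $T^n x$ and $T^n y$ both lie in $B(y,\delta)$, then applying this bound with $a=T^n x,\,b=y$ and with $a=T^n y,\,b=y$ places both $\sigma^n\widetilde x$ and $\sigma^n\widetilde y$ in $V$, whence $N\big((x,y),B(y,\delta)\times B(y,\delta)\big)\subseteq N\big((\widetilde x,\widetilde y),V\times V\big)$. The left-hand set lies in $\calf$ by the original strong proximality, and since $\calf$ is hereditary upwards the right-hand set lies in $\calf$ as well; as every neighborhood of $\widetilde y$ contains such a basic $V$, strong $\calf$-proximality follows. Therefore $A$ is an essential $\calf$-set via $(\widetilde X,\sigma,\widetilde x,\widetilde y,\widetilde U)$ with $\sigma$ onto, as required.
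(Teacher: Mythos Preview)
Your argument is correct and takes a genuinely different route from the paper. The paper does not pass to a shift space; instead it enlarges the state space by a small auxiliary factor $Z=\{0\}\cup\{1/n:n\in\bbn\}$, setting $X=Z\times Y$ and defining $T(0,c)=(0,c)$, $T(1,c)=(1,S(c))$, $T(1/(n+1),c)=(1/n,c)$. Surjectivity is then immediate because every point in the fiber $\{1/n\}\times Y$ has the trivial preimage in $\{1/(n+1)\}\times Y$, while the original dynamics lives untouched on the isolated fiber $\{1\}\times Y$. Since $\{1\}$ is isolated in $Z$, every neighborhood of $(1,b)$ contains a set of the form $\{1\}\times V'$, and the return-time sets coincide literally with those of the original system; no further estimate is required. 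Your approach via the orbit map into $X^{\bbn_0}$ is equally valid but pays for the canonicality of the shift with the extra uniform-continuity step: because basic neighborhoods in the product constrain finitely many coordinates $0,\dots,k$, you must propagate closeness forward through $T,\dots,T^k$, which you handle correctly via compactness. In short, the paper trades a slightly ad hoc construction for a one-line verification of strong $\calf$-proximality, whereas your construction is more natural but needs the additional (short) argument.
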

	\begin{proof}
		By the definition, there exist a dynamical system $(Y, S)$, $a,b\in Y$ and a neighborhood $V$ of $b$ such that $a$ is $\calf$-strongly proximal to $b$ 
		and $N\big((a,b),V\times V\big)\subset A$. 
		Let $Z=\{0\}\cup \{\tfrac{1}{n}\colon n\in \bbn\}$ with the ordinary Euclidean metric. 
		Then $Z$ is a compact metric space. 
		Let $X=Z\times Y$ with product metric. 
		Then $X$ is a compact metric space. Let map $T:X\to X$, for $c\in Y$ and $n\in \bbn$, define $T(0,c)=(0,c)$, $T(1,c)=(1,S(c))$, and $T(\tfrac{1}{n+1},c)=(\tfrac{1}{n},c)$. 
		Then $T$ is continuous and  $(X,T)$ is a dynamical system. 
		Let $x=(1,a)$ and $y=(1,b)$, 
		and $U=\{1\}\times V$, then $U$ is a neighborhood of $y$. 
		Now we note that 
		for any neighborhood $U'$ of $y$ in $X$, 
		there exists a neighborhood $V'$ of $b$ such that $\{1\}\times V'\subset U'$, then
		\[
		N\big((x,y),U'\times U'\big)\supset N\big((x,y),(\{1\}\times V') \times (\{1\}\times V')\big)=N\big((a,b),V'\times V'\big)\in \calf. 
		\]
		Since $\calf$ is a Furstenberg family, 
		so $N((x,y),U'\times U')\in \calf$, 
		$x$ is $\calf$-strongly proximal to $y$. 
		And we also note that 
		\[
		N\big((x, y),U\times U\big)=N\big((a,b), V\times V\big)\subset A, 
		\]
		so
		$A$ is an essential $\calf$-set via $(X,T,x,y,U)$ and  $T$ maps $X$ onto $X$. 
	\end{proof}
	
	Now we introduce the natural extension of a dynamical system, which is an invertible dynamical system. 
	Let $(X,T)$ be a dynamical system with $T$ maps $X$ onto $X$. 
	Let 
	\[
	\widetilde{X}=\big\{(x_0,x_1,\dots)\in X^{\bbn_0}\colon Tx_{i+1}=x_i,i\in \bbn_0\big\}
	\] 
	be a subspace of $X^{\bbn_0}$ with product topology. 
	Then $\widetilde{X}$ is a nonempty closed subset of $X^{\bbn_0}$, so $\widetilde{X}$ is a compact metrizable space.  
	Define $\widetilde{T}:\widetilde{X}\to \widetilde{X}$ with 
	\[
	\widetilde{T} (x_0,x_1,\dots)=\big(T x_0, x_0,x_1,\dots\big) 
	\] 
	for any $(x_0,x_1,\dots)\in X$.
	Then $\widetilde{T}$ is a homeomorphism. 
	The dynamical system $(\widetilde{X},\widetilde{T})$ is called the \emph{natural extension} of $(X,T)$. 
	
	The following result regarding natural extension is well known and can be easily verified. 
	Here we also provide a proof for completeness. 
	
	\begin{lem}\label{lem,extensioin basis}
		Let $(X,T)$ be a dynamical system with $T$ maps $X$ onto $X$. 
		Let $(\widetilde{X},\widetilde{T})$ be the natural extension of $(X,T)$. 
		Then 
		\[
		\big\{\widetilde{X}\cap p_n^{-1}(U)\colon U \text{ is an open set of } X, n\in \bbn_0\big\}
		\]
		is a topological basis of $\widetilde{X}$, where $p_n$ is a projection mapping  from $X^{\bbn_0}$ to $X$ with $p_n(x_0,x_1,\dots)=x_n$. 
	\end{lem}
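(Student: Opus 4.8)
The plan is to compare the proposed collection with the standard basis of the product topology on $X^{\bbn_0}$ restricted to $\widetilde{X}$, exploiting the defining relation $Tx_{i+1}=x_i$ to collapse constraints on several coordinates into a single constraint on the largest one. First I would record the elementary observation that on $\widetilde{X}$ the lower projections are determined by the higher ones: if $0\le k\le n$, then $p_{n-k}=T^{k}\circ p_n$ as maps on $\widetilde{X}$, since iterating $Tx_{i+1}=x_i$ gives $x_{n-k}=T^{k}x_n$ for every point $(x_0,x_1,\dots)\in\widetilde{X}$. I would also note that each proposed set $\widetilde{X}\cap p_n^{-1}(U)$ is genuinely open in $\widetilde{X}$, because $p_n$ is continuous and $U$ is open, so $p_n^{-1}(U)$ is open in $X^{\bbn_0}$.

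Next I would recall that the product topology on $X^{\bbn_0}$ has as a basis the cylinder sets $\bigcap_{i\in F}p_i^{-1}(U_i)$, where $F\subset\bbn_0$ is finite and each $U_i$ is open in $X$; intersecting with $\widetilde{X}$ then yields a basis of the subspace $\widetilde{X}$. It therefore suffices to show that every such basic open set, together with any of its points, contains a member of the proposed collection which also contains that point. So I would fix a point $x=(x_0,x_1,\dots)$ in $\widetilde{X}\cap\bigcap_{i\in F}p_i^{-1}(U_i)$, so that $x_i\in U_i$ for each $i\in F$, and set $N=\max F$.

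The key step is to construct a single open neighborhood in the $N$-th coordinate that encodes all the constraints at once. Using the continuity of the iterates $T^{N-i}$, I would put $V=\bigcap_{i\in F}(T^{N-i})^{-1}(U_i)$, which is open in $X$; moreover $x_N\in V$, because $T^{N-i}x_N=x_i\in U_i$ for each $i\in F$ by the observation above. I would then verify the inclusion $\widetilde{X}\cap p_N^{-1}(V)\subset\widetilde{X}\cap\bigcap_{i\in F}p_i^{-1}(U_i)$: for any $y=(y_0,y_1,\dots)\in\widetilde{X}$ with $y_N\in V$ and any $i\in F$, the same relation gives $y_i=T^{N-i}y_N\in U_i$. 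Hence $\widetilde{X}\cap p_N^{-1}(V)$ is a member of the proposed collection that contains $x$ and lies inside the given basic open set, which establishes that the collection is a basis.

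I do not expect a serious obstacle here; the only point that requires care is the direction of the relation $Tx_{i+1}=x_i$, which forces information to propagate from larger to smaller indices, so that the largest index $N=\max F$ is precisely the coordinate in which all of the finitely many conditions can be expressed simultaneously. Everything else is routine point-set topology.
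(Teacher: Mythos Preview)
Your proof is correct and follows essentially the same route as the paper: both collapse a basic cylinder set into a single coordinate by setting $V=\bigcap_{i}(T^{N-i})^{-1}(U_i)$ at the largest index $N$ and then using $y_i=T^{N-i}y_N$ on $\widetilde{X}$ to recover the remaining constraints. The only cosmetic difference is that the paper takes the index set to be $\{0,1,\dots,n\}$ rather than an arbitrary finite $F$, which amounts to the same thing.
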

	\begin{proof}
		Let $x=(x_0,x_1,\dots)\in \widetilde{X}$ and $W$ be a neighborhood of $x$ in $\widetilde{X}$. 
		Then there exist $n\in \bbn_0$ and nonempty open subsets $W_0$, $W_1, \dotsc, W_n$ of $X$ such that 
		\[
		x\in (W_0\times W_1 \times \dotsb \times W_n \times X \times X\times\dotsb)\cap  \widetilde{X}  \subset W. 
		\]
		Then for any $i\in \{0,1,\dotsc, n\}$, $W_i$ is a neighborhood of $x_i$. 
		Let 
		\[
		U=\bigcap_{i=0}^n T^{-i} (W_{n-i}). 
		\]
		By the definition of $(\widetilde{X},\widetilde{T})$, for any $i\in \{0,1,\dotsc, n\}$, $T^i x_n=x_{n-i}\in W_{n-i}$, so we have $x_n\in U$.  
		Since $T$ is continuous, so $U$ is a neighborhood of $x_n$ in $X$, $\widetilde{X}\cap p_n^{-1}(U)$ is a neighborhood of $x$ in $\widetilde{X}$. 
		For any $y=(y_0,y_1,\dots)\in \widetilde{X}\cap p_n^{-1}(U)$, for any $i\in \{0,1,\dotsc, n\}$, we have 
		$y_{n-i}=T^i y_n\in T^i(U)\subset W_{n-i}$, that is, 
		\[
		y\in W_0\times W_1 \times \dotsb \times W_n \times X  \times X\times \dotsb. 
		\]
		Hence 
		\[
		x\in \widetilde{X}\cap p_n^{-1}(U) \subset  (W_0\times W_1 \times \dotsb \times W_n \times X  \times X\times \dotsb)\cap \widetilde{X} \subset W, 
		\]
		the proof is completed. 
	\end{proof}

	The following result reveal that in the definition of essential $\calf$-set we need only consider invertible dynamical systems. The technique is standard, see e.g. {\cite[Theorem 3.4] {Bergelson1996}}. Here we also provide a proof for completeness.
	
	\begin{lem}\label{lem:ess-f-set-homeo}
		Let $\calf$ be a translation $+$  invariant Furstenberg family.
		If $A$ is an essential $\calf$-set, 
		then there exist $X, T, x,y, U$ such that $T$ is a homeomorphism from $X$ onto $X$ and $A$ is an essential $\calf$-set via $(X,T,x,y,U)$. 
	\end{lem}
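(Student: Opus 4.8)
The plan is to first reduce to the case where $T$ is onto, then pass to the natural extension to gain invertibility, and finally to choose the lifted points so carefully that $\calf$-strong proximality survives the lift. By Lemma~\ref{lem:ess-f-set-onto}, I may assume from the outset that $A$ is an essential $\calf$-set via $(X,T,x,y,U)$ with $T$ mapping $X$ onto $X$. I then form the natural extension $(\widetilde X,\widetilde T)$, which is invertible, and record the projection $p_0\colon\widetilde X\to X$, which satisfies $p_0\circ\widetilde T=T\circ p_0$; a direct computation of $\widetilde T^m$ gives $p_n(\widetilde T^m\tilde z)=T^{m-n}(p_0\tilde z)$ whenever $m\ge n$. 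Taking $\widetilde U=\widetilde X\cap p_0^{-1}(U)$ and any lifts $\tilde x,\tilde y$ with $p_0\tilde x=x$ and $p_0\tilde y=y$, this identity already yields $N\big((\tilde x,\tilde y),\widetilde U\times\widetilde U\big)=N\big((x,y),U\times U\big)\subset A$, so the return-time condition transfers for free and the whole difficulty is concentrated in producing $\calf$-strong proximality of $\tilde x$ to $\tilde y$.

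The obstruction is that, by Lemma~\ref{lem,extensioin basis}, a basic neighborhood of $\tilde y$ has the form $\widetilde X\cap p_n^{-1}(V)$ with $V$ a neighborhood of the $n$-th coordinate $y_n$ of $\tilde y$, and for $m>n$ one computes $N\big((\tilde x,\tilde y),(\widetilde X\cap p_n^{-1}(V))^2\big)\supset n+N\big((x,y),V\times V\big)$, which lies in $\calf$ only if the orbit of $(x,y)$ approaches the diagonal point $(y_n,y_n)$ with $\calf$-frequency. Hence the history $(y_1,y_2,\dots)$ of $\tilde y$ cannot be chosen arbitrarily. To control it I introduce the target set
\[
D=\{z\in X\colon N((x,y),V\times V)\in\calf \text{ for every neighborhood } V \text{ of } z\}.
\]
I will check that $D$ is closed and contains $y$, and then — this is where translation $+$ invariance of $\calf$ enters — that $T(D)\subset D$, using that $k\in N((x,y),T^{-1}V\times T^{-1}V)$ implies $k+1\in N((x,y),V\times V)$. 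Since strong proximality forces $N((x,y),V\times V)$ to be a (nonempty) member of $\calf$ for every neighborhood $V$ of $y$, the point $y$ is recurrent; combining recurrence with the forward invariance and closedness of $D$ shows $y\in T^n(D)$ for every $n$, so $y\in\bigcap_{n\ge0}T^n(D)$. A compactness (inverse-limit) argument then produces a full backward orbit $y_0=y,y_1,y_2,\dots$ lying entirely in $D$.

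With $\tilde y=(y_0,y_1,\dots)$ carrying this $D$-valued history (and $\tilde x$ any lift of $x$, whose history is irrelevant to the estimates), every basic neighborhood $\widetilde X\cap p_n^{-1}(V)$ of $\tilde y$ has $y_n\in D$, hence $N((x,y),V\times V)\in\calf$, and therefore $N\big((\tilde x,\tilde y),(\widetilde X\cap p_n^{-1}(V))^2\big)\in\calf$ by translation invariance and upward heredity. As basic neighborhoods are cofinal among neighborhoods of $\tilde y$ and $N$ is monotone in the neighborhood, this gives $\calf$-strong proximality of $\tilde x$ to $\tilde y$; together with the return-time identity this exhibits $A$ as an essential $\calf$-set via the invertible system $(\widetilde X,\widetilde T,\tilde x,\tilde y,\widetilde U)$. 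I expect the main obstacle to be precisely the construction of the $D$-valued backward orbit: it is the step that genuinely needs translation invariance of $\calf$ together with the recurrence of $y$, and it is exactly what makes an arbitrary lift of $\tilde y$ fail.
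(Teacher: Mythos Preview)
Your proposal is correct and shares the paper's overall architecture: reduce to a surjective $T$ via Lemma~\ref{lem:ess-f-set-onto}, pass to the natural extension, and show that a carefully chosen lift $\tilde y$ inherits $\calf$-strong proximality, while the return-time inclusion $N\big((\tilde x,\tilde y),\widetilde U\times\widetilde U\big)=N\big((x,y),U\times U\big)\subset A$ is immediate from $\widetilde U=p_0^{-1}(U)\cap\widetilde X$.

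The genuine difference is in how the history of $\tilde y$ is selected. The paper chooses each $y_m$ to be a \emph{recurrent} point (using the fact $T(R(X,T))=R(X,T)$ stated in the introduction); then, given a basic neighborhood $\widetilde X\cap p_m^{-1}(U')$ of $\tilde y$, recurrence of $y_m$ supplies $n>m$ with $T^{n-m}y\in U'$, so that $T^{m-n}U'$ is a neighborhood of $y$, and strong proximality at $y$ plus translation $+$ invariance give $n+N\big((x,y),T^{m-n}U'\times T^{m-n}U'\big)\subset N\big((\tilde x,\tilde y),\cdot\big)\in\calf$. You instead isolate the ``target set'' $D$ once and for all and arrange $y_m\in D$, which eliminates the ``pull the neighborhood back to $y$'' manoeuvre at each level. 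The price is the extra step $y\in T^n(D)$: this uses that $T^n(D)$ is closed (as the continuous image of the compact set $D$) together with $y\in\overline{\{T^ky:k\ge n\}}\subset\overline{T^n(D)}$, which follows from recurrence of $y$ and $T(D)\subset D$. Both arguments consume exactly the same two hypotheses --- recurrence of $y$ and translation $+$ invariance of $\calf$ --- so neither is strictly more economical; your formulation via $D$ is somewhat cleaner to state and may adapt more readily to variants, while the paper's is slightly more hands-on and avoids introducing an auxiliary set.
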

	\begin{proof} 
		By Lemma \ref{lem:ess-f-set-onto}, there exist $X, T, x,y, U$ such that $T$ maps $X$ onto $X$ and $A$ is an essential $\calf$-set via $(X,T,x,y,U)$. 
		Let $(\widetilde{X},\widetilde{T})$ be the natural extension of $(X,T)$. 
		By Lemma \ref{lem,extensioin basis}, $\{\widetilde{X}\cap p_n^{-1}(W)\colon W \text{ is an open set of } X, n\in \bbn_0\}$ is a topological basis of $\widetilde{X}$, where $p_n$ is a projection mapping  from $X^{\bbn_0}$ to $X$ with $p_n(x_0,x_1,\dots)=x_n$. 
		We have that $x$ is $\calf$-strongly proximal to $y$ by assumption, it implies that $y$ is a recurrent  point of $(X,T)$. 
		So there exist recurrent points $y_1,y_2,\dots$ in $(X,T)$ 
		such that 
		$
		(y,y_1,y_2,\dots)\in \widetilde{X}. 
		$
		Since $T$ is a surjection, there exist $x_1,x_2,\dots$ 
		such that 
		$
		(x,x_1,x_2,\dots)\in \widetilde{X}. 
		$
		
		Let  $a=(x,x_1,x_2,\dots)$, $b=(y,y_1,y_2,\dots)$ 
		and $V=\widetilde{X}\cap p_0^{-1}(U)$. 
		Then $V$ is a neighborhood of $b$ in $\widetilde{X}$ 
		and 
		\[
		N((a,b),V\times V)=N((x,y),U\times U)\subset A.
		\]
		For any neighborhood $V'$ of $b$ in $\widetilde{X}$, 
		there exist  $m\in \bbn_0$ and a neighborhood $U'$ of $y_m$   such that $\widetilde{X}\cap p_m^{-1}(U')\subset V'$. 
		Since $y_m$ is a recurrent point of $(X,T)$, there exists $n\in \bbn$ with $n>m$ such that $ T^n y_m=T^{n-m} y\in U'$. 
		So $T^{m-n} U'$ is a neighborhood of $y$. 
		For any $i\in N((x,y), T^{m-n} U'\times T^{m-n} U')$, we have $T^ix\in T^{m-n} U'$ and $T^iy\in T^{m-n} U'$, that is, $T^{i+n-m}x=T^{i+n}x_m\in U'$ and $ T^{i+n-m}y=T^{i+n} y_m\in U'$. 
		Thus 
		\[
		i+n\in N\Big((a,b), \big(\widetilde{X}\cap p_m^{-1}(U')\big)\times \big(\widetilde{X}\cap p_m^{-1}(U')\big)\Big)\subset N\big((a,b), V'\times V'\big). 
		\]
		So we have 
		\[
		n+N\big((x,y), T^{m-n} U'\times T^{m-n} U'\big)\subset  N\big((a,b),V'\times V'\big). 
		\]
		Since $\calf$ is a translation  $+$  invariant Furstenberg family. 
		So $N((a,b),V'\times V')\in \calf$, 
		so that 
		$a$ is $\calf$-strongly proximal to $b$. 
		Then $A$ is an essential $\calf$-set via $(\widetilde{X},\widetilde{T},a,b,V)$ and 
		$\widetilde{T}$ is a homeomorphism from $\widetilde{X}$ onto $\widetilde{X}$. 
	\end{proof}
	
	The following result shows that (2) implies (1) in Theorem \ref{thm:main-result-1}. 
	
	\begin{prop}\label{prop:prox-lift=>iter-inv}
		Let $\calf$ be a Furstenberg family and $\alpha>0$. 
		Assume that for any invertible dynamical system $(X,T)$ and any $x,y\in X$ with $x$ $\calf$-strongly proximal to $y$, there exists  $0<\gamma_0<1$ such that $(x,\gamma_0)$ is $\calf$-strongly proximal to $(y,\gamma_0)$ for the suspension $(Y,F_{\alpha^{-1}})$ of $(X,T)$. 
		If $A$ is an essential $\calf$-set,
		then for any $0<\gamma<1$,  $g_{\alpha,\gamma}(A)$ is also an essential $\calf$-set.
	\end{prop}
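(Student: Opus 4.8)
The plan is to realize $g_{\alpha,\gamma}(A)$ directly as a return-time set of the suspension, using the strong proximality supplied by the hypothesis. First I would pass to an invertible base: since $A$ is an essential $\calf$-set, Lemma~\ref{lem:ess-f-set-homeo} lets me assume $A$ is an essential $\calf$-set via $(X,T,x,y,U)$ with $T$ a homeomorphism, so that $x$ is $\calf$-strongly proximal to $y$ and $N((x,y),U\times U)\subset A$. Applying the assumed lift property to this invertible system yields $\gamma_0\in(0,1)$ such that $(x,\gamma_0)$ is $\calf$-strongly proximal to $(y,\gamma_0)$ in the suspension $(Y,F_{\alpha^{-1}})$.

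The core of the argument is to choose, for the prescribed $\gamma$, a neighborhood $\calu$ of $(y,\gamma_0)$ whose return-time set is forced into $g_{\alpha,\gamma}(A)$. Recall that $F_{\alpha^{-1}}^m(x,\gamma_0)=\big(T^{\lfloor m\alpha^{-1}+\gamma_0\rfloor}x,\ m\alpha^{-1}+\gamma_0-\lfloor m\alpha^{-1}+\gamma_0\rfloor\big)$, and likewise for $y$. I would take $\calu=U\times J$ with $J$ a small interval around $\gamma_0$. Then, writing $n=\lfloor m\alpha^{-1}+\gamma_0\rfloor$ and $\theta=m\alpha^{-1}+\gamma_0-n$, membership $m\in N\big(((x,\gamma_0),(y,\gamma_0)),\calu\times\calu\big)$ forces $T^nx,T^ny\in U$ (hence $n\in N((x,y),U\times U)\subset A$, with $n\ge 1$ once the finitely many return times having $n=0$ are discarded by shrinking the neighborhood) together with $\theta\in J$. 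The key is then purely number-theoretic: from $m\alpha^{-1}+\gamma_0=n+\theta$ one gets $m=n\alpha+(\theta-\gamma_0)\alpha$, and dividing the defining inequalities of $\lfloor n\alpha+\gamma\rfloor$ by $\alpha>0$ gives
\[
m=\lfloor n\alpha+\gamma\rfloor \iff \theta\in(\gamma_0-(1-\gamma)\alpha^{-1},\ \gamma_0+\gamma\alpha^{-1}].
\]
Choosing $J\subset(0,1)$ to be a small open interval around $\gamma_0$ lying inside this window, every such $m$ satisfies $m=\lfloor n\alpha+\gamma\rfloor=g_{\alpha,\gamma}(n)\in g_{\alpha,\gamma}(A)$.

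With this choice $N\big(((x,\gamma_0),(y,\gamma_0)),\calu\times\calu\big)\subset g_{\alpha,\gamma}(A)$, while $\calf$-strong proximality of $(x,\gamma_0)$ to $(y,\gamma_0)$ guarantees that this return-time set lies in $\calf$. Since $\gamma_0\in(0,1)$, there is no identification issue and $\calu$ is genuinely a neighborhood of $(y,\gamma_0)$. Hence $g_{\alpha,\gamma}(A)$ is an essential $\calf$-set, witnessed by $(Y,F_{\alpha^{-1}},(x,\gamma_0),(y,\gamma_0),\calu)$.

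The step I expect to be the main obstacle, or at least the conceptual crux, is the displayed equivalence together with the verification that its admissible window for the second coordinate always contains $\gamma_0$ strictly in its interior. This is precisely what makes the proof work for every $\gamma$: the hypothesis hands us a single $\gamma_0$ with no control over its value, yet we must cover all $\gamma\in(0,1)$ at once; the saving fact is that the window $(\gamma_0-(1-\gamma)\alpha^{-1},\,\gamma_0+\gamma\alpha^{-1}]$ has length $\alpha^{-1}$ and brackets $\gamma_0$ on both sides exactly because $0<\gamma<1$, so one $\gamma_0$ serves all $\gamma$ once $J$ is chosen accordingly. The remaining bookkeeping, namely excluding the finitely many return times with $n=0$ (e.g.\ by taking $x\notin U$ when $x\neq y$, or shrinking $J$ away from their second coordinates), is routine.
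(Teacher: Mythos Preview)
Your proof is correct and follows essentially the same approach as the paper: reduce to an invertible witness via Lemma~\ref{lem:ess-f-set-homeo}, lift to the suspension using the hypothesis to obtain $\gamma_0$, and then choose a product neighborhood $U\times J$ of $(y,\gamma_0)$ whose return-time set is forced into $g_{\alpha,\gamma}(A)$ by an elementary floor inequality. The only cosmetic difference is that the paper routes the last step through an auxiliary set $B=\{k:\exists\,n\in A,\ |k-n\alpha|<\min(\gamma,1-\gamma)\}\subset g_{\alpha,\gamma}(A)$ and a symmetric interval $J=(\gamma_0-\eps,\gamma_0+\eps)$ with $\eps=\min\{\gamma_0,1-\gamma_0,\min(\gamma,1-\gamma)/\alpha\}$, whereas you work directly with the sharp window $(\gamma_0-(1-\gamma)\alpha^{-1},\,\gamma_0+\gamma\alpha^{-1}]$; the paper's bound $\eps\le\min(\gamma,1-\gamma)/\alpha$ also silently guarantees $n\ge 1$, which is exactly your ``shrink $J$'' fix.
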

	
	\begin{proof} 
		Fix $0<\gamma<1$.
		Let $\delta=\min\{\gamma, 1-\gamma\}$ and 
		\[
		B=\big\{k\in \bbn\colon  \exists\, n\in A, \text{ s.t. }|k-n\alpha|<\delta\big\}.
		\]
		First we claim that $B\subset g_{\alpha,\gamma}(A)$.
		Indeed, for any $k\in B$, there exists $n\in A$ such that $|k-n\alpha|<\delta$.
		Then $\gamma-1\leq -\delta<k-n\alpha<\delta\leq\gamma$.
		This implies that $ k<n\alpha+\gamma<k+1$, so $\lfloor n\alpha+\gamma\rfloor =k$.
		
		As $A$ is an essential $\calf$-set, by Lemma~\ref{lem:ess-f-set-homeo}
		there exist an invertible dynamical system $(X,T)$, $x,y\in X$ and a neighborhood $U$ of $y$ such that  $x$ is $\calf$-strongly proximal to $y$ 
		and $N\big((x,y),U\times U\big)\subset A$. 
		By the assumption, there exists $0<\gamma_0<1$ such that $(x,\gamma_0)$ is $\calf$-strongly proximal to $(y,\gamma_0)$ for the suspension $(Y,F_{\alpha^{-1}})$. 
		Now let $\eps=\min\{\gamma_0,1-\gamma_0,\frac{\delta}{\alpha}\}$ and $W=U\times (\gamma_0-\eps, \gamma_0+\eps)$. 
		Then $W$ is a neighborhood of $(y,\gamma_0)$ and $
		N\Big(\big((x,\gamma_0),(y,\gamma_0)\big), W\times W\Big)$ is an essential $\calf$-set.
		It is sufficient to prove that
		\[
		N\Big(\big((x,\gamma_0),(y,\gamma_0)\big), W\times W\Big)\subset B.
		\]
		
		To this end, let 
		$k\in N\Big(\big((x,\gamma_0),(y,\gamma_0)\big), W\times W\Big)$.
		Then 
		\[
		F_{\alpha^{-1}}^k(x,\gamma_0)=(T^{\lfloor\tfrac{k}{\alpha}+\gamma_0\rfloor}x, \tfrac{k}{\alpha}+\gamma_0-\lfloor\tfrac{k}{\alpha}+\gamma_0\rfloor)\in W,
		\]
		and 
		\[
		F_{\alpha^{-1}}^k(y,\gamma_0)=(T^{\lfloor\frac{k}{\alpha}+\gamma_0\rfloor}y, \tfrac{k}{\alpha}+\gamma_0-\lfloor\tfrac{k}{\alpha}+\gamma_0\rfloor)\in W,
		\]
		that is,
		$T^{\lfloor\frac{k}{\alpha}+\gamma_0\rfloor}x, T^{\lfloor\frac{k}{\alpha}+\gamma_0\rfloor}y\in U$ and $\frac{k}{\alpha}+\gamma_0-\lfloor\frac{k}{\alpha}+\gamma_0\rfloor\in (\gamma_0-\eps, \gamma_0+\eps)$. 
		So $\lfloor\frac{k}{\alpha}+\gamma_0\rfloor\in N\big((x,y),U\times U\big)\subset A$ and
		$-\eps < \frac{k}{\alpha}-\lfloor\frac{k}{\alpha}+\gamma_0\rfloor <\eps$.
		Also
		$ -\delta\leq -\eps \alpha<k-\lfloor\frac{k}{\alpha}+\gamma_0\rfloor \alpha < \eps \alpha \leq \delta$, so
		$|k-\lfloor\frac{k}{\alpha}+\gamma_0\rfloor \alpha | < \delta$ and hence $k\in B$.
		This ends the proof.
	\end{proof}
	
	\begin{lem}\label{lem,A-A}
		Let $\calf$ be a Furstenberg family. 
		If $A$ is an essential $\calf$-set, then there exists an essential $\calf$-set $M\subset A$ such that for any  $m\in M$, $(-m+A)\cap A$ is  an  essential $\calf$-set.
		In particular $(A-A)\cap A$ is an essential $\calf$-set. 
	\end{lem}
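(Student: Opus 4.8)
The plan is to realize all the required sets as return-time sets of suitable points, so that the essential $\calf$-set property is inherited directly from that of $A$. First I would unwind the hypothesis: $A$ is an essential $\calf$-set via some $(X,T,x,y,U)$, and I may assume $U$ is open, since replacing $U$ by its interior only shrinks $N\big((x,y),U\times U\big)$ and leaves the $\calf$-strong proximality of $x$ to $y$ untouched. I then set $M=N\big((x,y),U\times U\big)$. By $\calf$-strong proximality $M\in\calf$, so $M\neq\emptyset$; moreover $M\subseteq A$, and $M$ is itself an essential $\calf$-set via the very same data $(X,T,x,y,U)$. This $M$ is the distinguished subset of $A$ asserted in the statement.

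Now fix $m\in M$, so that $T^m x,T^m y\in U$. The central idea is to pass to the product system $(X\times X,T\times T)$ and follow the shifted pair $P=(x,T^m x)$ and $Q=(y,T^m y)$. A direct unwinding shows
\[
N\big((P,Q),(U\times U)\times(U\times U)\big)=(-m+M)\cap M,
\]
because $(T\times T)^k P\in U\times U$ encodes precisely $T^k x\in U$ and $T^{k+m}x\in U$, and likewise for $Q$. Since $U$ is open and $T^m y\in U$, the set $U\times U$ is a neighborhood of $Q$. Hence, once I know that $P$ is $\calf$-strongly proximal to $Q$, the displayed set is an essential $\calf$-set via $(X\times X,T\times T,P,Q,U\times U)$.

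The step I expect to cost the most care is this strong-proximality lift to the product system. Given a neighborhood of $Q$, I would shrink it to a product box $V_1\times V_2$ with $y\in V_1$ and $T^m y\in V_2$, and use continuity of $T^m$ at $y$ to produce a neighborhood $V$ of $y$, contained in $V_1$, with $T^m(V)\subseteq V_2$. One then checks $N\big((x,y),V\times V\big)\subseteq N\big((P,Q),(V_1\times V_2)\times(V_1\times V_2)\big)$; as the left-hand side lies in $\calf$ and $\calf$ is hereditary upwards, so does the right-hand side, giving the desired proximality. Consequently $(-m+M)\cap M$ is essential, and since $(-m+M)\cap M\subseteq(-m+A)\cap A$ and $\ess(\calf)$ is a Furstenberg family, $(-m+A)\cap A$ is an essential $\calf$-set for every $m\in M$.

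Finally, for the ``in particular'' clause I would pick any $m\in M$ (possible since $M\neq\emptyset$). As $m\in M\subseteq A$, every $k\in(-m+A)\cap A$ satisfies both $k\in A$ and $k=(k+m)-m\in A-A$, so $(-m+A)\cap A\subseteq(A-A)\cap A$. Upward heredity of $\ess(\calf)$ then yields that $(A-A)\cap A$ is an essential $\calf$-set, completing the proof.
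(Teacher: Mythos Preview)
Your argument is correct. Both your proof and the paper's identify the same witnessing set $M=N\big((x,y),U\times U\big)$ and ultimately realize $(-m+M)\cap M$ as a return-time set, but the executions differ. You pass to the product system $(X\times X,T\times T)$, introduce the shifted pair $P=(x,T^m x)$, $Q=(y,T^m y)$, and then verify that $\calf$-strong proximality lifts to this pair. The paper instead stays inside the original system and simply shrinks the neighborhood: setting $V=U\cap T^{-m}U$, it observes directly that $N\big((x,y),V\times V\big)\subset(-m+A)\cap A$. In fact $N\big((x,y),V\times V\big)=(-m+M)\cap M$, so both routes compute exactly the same set; the paper's version just avoids the product machinery and the separate proximality-lifting step. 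Your approach has the mild conceptual advantage of making the product structure explicit, which can generalize nicely, while the paper's is shorter and more self-contained.
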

	\begin{proof}
		Let $A$  be an essential $\calf$-set via $(X,T,x,y, U)$. 
		we may assume without loss of generality that $U$ is an open neighborhood of $y$. 
		Let $M=N\big((x,y),U\times U\big)$. Then $M$ is an essential $\calf$-set.
		Fix $m\in M$, that is, $T^mx,T^my\in U$. 
		Let $V=U\cap T^{-m}U$. Then $V$ is an open neighborhood of $y$ and $N\big((x,y),V\times V\big)$ is an essential $\calf$-set.
		For any $n\in N\big((x,y),V\times V\big)$, we have $T^n x,T^n y\in V\subset T^{-m}U$, then $T^{n+m} x,T^{n+m} y\in U$, so $n+m\in A$ and $n\in -m+A$. This shows that $N\big((x,y),V\times V\big)\subset -m+A$. 
		It is clear that $N\big((x,y),V\times V\big)\subset N\big((x,y),U\times U\big) \subset A$, so $N\big((x,y),V\times V\big)\subset (-m+A)\cap A$ and then $(-m+A)\cap A$ is an essential $\calf$-set. Finally, as $(-m+A)\cap A\subset (A-A)\cap A$,   $(A-A)\cap A$ is also an essential $\calf$-set. 
	\end{proof}
	
	We note that $\ess(\calf)\subset \calf$. 
	By the following  result, we have the implication (1) implies (2) in Theorem \ref{thm:main-result-1}.

	\begin{prop}\label{prop:iter-inv=>prox-lift}
		Let $\calf$ be a Furstenberg family and $\alpha>0$. 
		Assume that  $\ess(\calf)$ has the Ramsey property 
		and for any $A\in \ess(\calf)$ 
		and any $0<\gamma<1$,  $g_{\alpha,\gamma}(A)\in \calf$. 
		If $(X,T)$ is an invertible dynamical system and $x,y\in X$ with $x$ $\calf$-strongly proximal to $y$, 
		then for any $0<\gamma<1$, $(x,\gamma)$ is $\calf$-strongly proximal to $(y,\gamma)$ in the suspension $(Y,F_{\alpha^{-1}})$ of $(X,T)$. 
	\end{prop}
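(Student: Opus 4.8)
The plan is to unwind the definition of $\calf$-strong proximality in the suspension, reduce it to a single combinatorial statement about a return-time set, and then feed that set into the spectrum hypothesis. Fix $0<\gamma<1$. Since $\calf$ is hereditary upwards, it suffices to verify, for a neighborhood basis of $(y,\gamma)$, that the corresponding return-time set of $\big((x,\gamma),(y,\gamma)\big)$ lies in $\calf$. A convenient basis consists of the sets $W=U\times(\gamma-\eps,\gamma+\eps)$ with $U$ an open neighborhood of $y$ in $X$ and $0<\eps<\min\{\gamma,1-\gamma\}$. Writing $A=N\big((x,y),U\times U\big)$, which is an essential $\calf$-set because $x$ is $\calf$-strongly proximal to $y$, I would use the formula $F_{\alpha^{-1}}^k(x,\gamma)=\big(T^{\lfloor k/\alpha+\gamma\rfloor}x,\ k/\alpha+\gamma-\lfloor k/\alpha+\gamma\rfloor\big)$ to identify
\[
N\Big(\big((x,\gamma),(y,\gamma)\big),W\times W\Big)=\big\{k\in\bbn:\exists\,m\in A,\ |k-m\alpha|<\eps\alpha\big\}=:N.
\]
Here the first coordinate returns to $U$ precisely when $\lfloor k/\alpha+\gamma\rfloor\in A$, while the second returns to $(\gamma-\eps,\gamma+\eps)$ precisely when $k/\alpha$ lies within $\eps$ of an integer; a short floor-function argument shows that these two integers coincide. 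Thus everything reduces to proving $N\in\calf$.

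The hard part will be that one cannot simply apply the hypothesis to $A$: the set $N$ only records integers $k$ within $\eps\alpha$ of some $m\alpha$ with $m\in A$, yet a typical $m\in A$ may have $m\alpha$ at distance up to $\tfrac12$ from every integer, so for small $\eps$ no spectrum image $g_{\alpha,\gamma'}(A)$ need land in $N$. The idea to overcome this is to pass to differences. First I would partition $A$ according to the position of the fractional part of $m\alpha$ inside a partition of $[0,1)$ into finitely many intervals of length less than $\min\{\eps\alpha,\tfrac12\}$. Since $\ess(\calf)$ has the Ramsey property and $A\in\ess(\calf)$, iterating the Ramsey property over this finite partition yields a class $A'\in\ess(\calf)$ on which the fractional part of $m\alpha$ is confined to a single short interval of length $\eta$.

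Next I would apply Lemma~\ref{lem,A-A} to $A'$, obtaining that $D=(A'-A')\cap A'$ is again an essential $\calf$-set. For $d\in D$ we have $d=m_1-m_2$ with $m_1,m_2\in A'$, so the fractional parts of $m_1\alpha$ and $m_2\alpha$ differ by less than $\eta$, whence $d\alpha$ lies within $\eta$ of an integer. As $\eta<\tfrac12$, the nearest integer to $d\alpha$ is exactly $g_{\alpha,1/2}(d)=\lfloor d\alpha+\tfrac12\rfloor$, and it sits within $\eta\le\eps\alpha$ of $d\alpha$; since $d\in A'\subset A$, this witnesses $g_{\alpha,1/2}(d)\in N$. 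Therefore $g_{\alpha,1/2}(D)\subset N$. Because $D\in\ess(\calf)$, the hypothesis gives $g_{\alpha,1/2}(D)\in\calf$, and since $\calf$ is hereditary upwards we conclude $N\in\calf$. As $W$ ranged over a neighborhood basis of $(y,\gamma)$ and $\gamma\in(0,1)$ was arbitrary, this proves that $(x,\gamma)$ is $\calf$-strongly proximal to $(y,\gamma)$ for every $0<\gamma<1$.

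The only genuinely delicate points are the exact computation of $N$ (verifying that the "base" and "fiber" conditions pin down the same integer $m$) and the verification that the difference $d\alpha$ stays within $\eta$ of an integer; the conceptual core is combining the Ramsey property of $\ess(\calf)$ with the difference-set structure supplied by Lemma~\ref{lem,A-A}, which converts the uncontrolled fractional parts of single elements into the near-integrality of differences.
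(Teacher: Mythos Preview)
Your proof is correct and takes essentially the same approach as the paper: partition $A$ into finitely many classes with controlled fractional behavior, use the Ramsey property of $\ess(\calf)$ to select one class, apply Lemma~\ref{lem,A-A} to pass to differences, and then feed the resulting essential $\calf$-set into the spectrum hypothesis to land inside the return-time set. The only cosmetic differences are the parameterization of the partition (you use the fractional part of $m\alpha$ in $[0,1)$, the paper uses the position of the nearest $n/\alpha$ above $m$) and the choice of spectrum parameter ($\gamma'=\tfrac12$ for you, $\gamma'=\eps\alpha$ in the paper).
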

	
	\begin{proof} 
		Fix $0<\gamma<1$. 
		For any neighborhood $W$ of $(y,\gamma)$ in $Y$, 
		there exists a neighborhood $U$ of $y$ in $X$ and $0<\eps<\min\{\gamma,1-\gamma,\frac{1}{2\alpha}\}$ 
		such that $U\times (\gamma-\eps,\gamma+\eps)\subset W$. 
		Recall that for any $n\in \bbn$, 
		\[
		F_{\alpha^{-1}}^n(x,\gamma)=(T^{\lfloor\tfrac{n}{\alpha}+\gamma\rfloor}x,\tfrac{n}{\alpha}+\gamma-\lfloor\tfrac{n}{\alpha}+\gamma\rfloor).
		\]
		Let $A=N((x,y),U\times U)$ and 
		\[
		B=\{m\in A \colon \exists\, n\in \bbn \text{ s.t. } |\tfrac{n}{\alpha}-m|<\eps\}.
		\]
		As $x$ is $\calf$-strongly proximal to $y$, we have $A$ is an essential $\calf$-set. 
		We claim that $B$ is also an essential $\calf$-set.
		
		To this end, pick $L\in\bbn$ with $\frac{1}{L}<\eps$ and 
		let $\ell = L(\lfloor\tfrac{1}{\alpha}\rfloor+1)$.
		For each $i=0,1,\dotsc,\ell-1$, let
		\[
		A_i=\big\{m\in A\colon \exists\, n\in \bbn \text{ s.t. } 
		\tfrac{n}{\alpha}\in \big[m+\tfrac{i}{L}, m+\tfrac{i+1}{L}\big)\big\}.
		\]
		Then $A=\bigcup_{i=0}^{\ell-1}A_i$, 
		because for any $m\in A$ there exists $n\in \bbn$ such that $\tfrac{n}{\alpha}\in [m,m+\lfloor\tfrac{1}{\alpha}\rfloor+1)$. 
		As  $\ess(\calf)$  has the Ramsey property,
		there exists $j\in \{0,1,\dotsc,\ell-1\}$ such that $A_{j}\in \ess(\calf)$.  
		By Lemma~\ref{lem,A-A}, $(A_j-A_j)\cap A_j\in \ess(\calf)$. 
		Now is suffices to show that $(A_j-A_j)\cap A_j\subset B$. 
		Fix $m\in (A_j-A_j)\cap A_j$.
		There exist $m_1,m_2\in A_j$ with $m=m_1-m_2$.
		By the definition of the $A_j$, 
		there exist $n_1,n_2\in\bbn$ such that 
		\[
		\tfrac{n_1}{\alpha}\in [m_1+\tfrac{j}{L}, m_1+\tfrac{j+1}{L}) 
		\text{ and }
		\tfrac{n_2}{\alpha}\in [m_2+\tfrac{j}{L}, m_2+\tfrac{j+1}{L}). 
		\]
		Then 
		\[
		\tfrac{n_1-n_2}{\alpha}\in ((m_1-m_2)-\tfrac{1}{L}, (m_1-m_2)+\tfrac{1}{L}).
		\]
		Let $n=n_1-n_2$. Then 
		\[
		|\tfrac{n}{\alpha}-m|=|\tfrac{n_1-n_2}{\alpha}-(m_1-m_2)|< \tfrac{1}{L}< \eps. 
		\]
		This shows that $m\in B$.
		
		Now let 
		\[
		D=\{n\in\bbn \colon \exists\, m\in A  \text{ s.t. } |\tfrac{n}{\alpha}-m|<\eps\}.
		\]
		For any $m\in B\subset A$, there exists $n\in \bbn$ such that $|\frac{n}{\alpha}-m|<\eps$, 
		so we have  
		$n\in D$, 
		$-\eps < \tfrac{n}{\alpha}-m < \eps$ 
		and 
		$m\alpha-\eps \alpha< n < m\alpha+\eps \alpha$. 
		Since $\eps < \frac{1}{2\alpha}$, one has $\lfloor m\alpha+\eps \alpha\rfloor=n\in D$. Thus $g_{\alpha,\eps \alpha}(B)\subset D$.
		As $B$ is an essential $\calf$-set and by the assumption, 
		$g_{\alpha,\eps \alpha}(B)\in \calf$. 
		Since $\calf$ is a Furstenberg family, so $D\in \calf$. 
		
		For any $n\in D$, there exists $m\in A$ such that 
		$|\frac{n}{\alpha}-m|<\eps$, 
		so we have 
		$
		-\eps < \tfrac{n}{\alpha}-m < \eps$, 
		and 
		$
		\gamma-\eps< (\tfrac{n}{\alpha}+\gamma)-m < \gamma +\eps
		$. 
		Since $\eps<\min\{\gamma,1-\gamma\}$, one has $0<(\tfrac{n}{\alpha}+\gamma)-m <1$ and  $\lfloor\tfrac{n}{\alpha}+\gamma\rfloor=m \in A$. 
		Then 
		\begin{align*}
			F_{\alpha^{-1}}^n(x,\gamma)&=(T^{\lfloor\tfrac{n}{\alpha}+\gamma\rfloor} x, \tfrac{n}{\alpha}+\gamma-\lfloor\tfrac{n}{\alpha}+\gamma\rfloor)\\
			&=(T^m x, \tfrac{n}{\alpha}+\gamma-m)\\
			&\in U\times (\gamma-\eps,\gamma+\eps)\\
			&\subset W
		\end{align*}
		and 
		\[
		F_{\alpha^{-1}}^n(y,\gamma)=(T^m y, \tfrac{n}{\alpha}+\gamma-m)\in W. 
		\]
		This implies that $D\subset N\big(((x,\gamma),(y,\gamma)), W\times W\big)\in \calf$.  
		Thus $(x,\gamma)$ is $\calf$-strongly proximal to $(y,\gamma)$ in $(Y,F_{\alpha^{-1}})$. 
	\end{proof}
	
	\begin{proof}[Proof of Theorem~\ref{thm:main-result-1}] 
		It follows from Proposition~\ref{prop:prox-lift=>iter-inv} and  Proposition~\ref{prop:iter-inv=>prox-lift}. 
	\end{proof}

	The following theorem states that if $\calf$ is a translation $+$ invariant Furstenberg family with the Ramsey property, then the concept of essential $\calf$-set in this paper is consistent with that in {\cite{Li2012}}. 
	
	\begin{thm} \label{thm:ess-set-idempotent}
		Let $\calf$ be a translation $+$ invariant Furstenberg family with the Ramsey property. 
		If $A$ is a subset of $\bbn$, 
		then the following statements are equivalent: 
		\begin{enumerate}
			\item $A$  is an essential $\calf$-set. 
			\item there exists an idempotent $p\subset \calf$ with $A\in p$. 
			\item there exist a dynamical system $(X,T)$, a pair $x,y\in X$ where $x$ is $\calf$-strongly proximal to $y$, and a neighborhood $U$ of $y$ such that $A=N(x,U)$. 
			\item there is a decreasing sequence $\{C_n\}_{n=1}^\infty$ of subsets of $A$ such that $C_n\in \calf$ for any $n\in\bbn$, and for any $r\in C_n$ there exists $m\in \bbn$ such that $r+C_m\subset C_n$. 
		\end{enumerate}
	\end{thm}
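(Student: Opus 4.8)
The plan is to prove the four statements equivalent through the cycle $(1)\Rightarrow(2)\Rightarrow(3)\Rightarrow(1)$, supplemented by $(2)\Leftrightarrow(4)$. The recurring device is the algebra of $(\beta\bbn,+)$: I let $\beta\bbn$ act on any system $(Z,S)$ by $q\cdot z=q\text{-}\lim_{n}S^nz$, so that $(p+q)z=p(qz)$, and for $B\subseteq\bbn$ I write $\widehat{B}=\{q\in\beta\bbn:B\in q\}$. Two facts about $\calf$ will be used throughout, and they are where the hypotheses enter. First, since $\calf$ has the Ramsey property and is hereditary upwards it is partition regular, so by the standard correspondence between partition regular families and ultrafilters (see \cite{HS11}), for every $B\in\calf$ there is $q$ with $B\in q$ and $q\subseteq\calf$; writing $\overline{\calf}=\{q\in\beta\bbn:q\subseteq\calf\}$, this says $\overline{\calf}\cap\widehat{B}\neq\emptyset$ whenever $B\in\calf$. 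Second, $\overline{\calf}$ is a closed subsemigroup: it is closed because $\overline{\calf}=\bigcap_{B\notin\calf}\widehat{\bbn\setminus B}$, and if $p,q\subseteq\calf$ and $B\in p+q$, then $S=\{n:-n+B\in q\}\in p\subseteq\calf$ is nonempty, so picking $n_0\in S$ gives $-n_0+B\in\calf$, whence translation $+$ invariance and $B\supseteq n_0+(-n_0+B)$ force $B\in\calf$; thus $p+q\subseteq\calf$.

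For $(1)\Rightarrow(2)$, let $A$ be an essential $\calf$-set via $(X,T,x,y,U)$ and set $E=\bigcap_{V}\widehat{N((x,y),V\times V)}$, the intersection over open $V\ni y$. Strong proximality makes each $N((x,y),V\times V)$ a member of $\calf$, and these sets form a filter base since $N((x,y),V_1\times V_1)\cap N((x,y),V_2\times V_2)=N((x,y),(V_1\cap V_2)\times(V_1\cap V_2))$. A short computation in the product system identifies $E=\{q:qx=y \text{ and } qy=y\}$, and $(p+q)z=p(qz)$ shows $E$ is a subsemigroup. The filter-base property reduces finite intersections of the $\widehat{N((x,y),V\times V)}$ to a single such set, which meets $\overline{\calf}$ by partition regularity; hence $E\cap\overline{\calf}$ is a nonempty closed subsemigroup. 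By the Ellis--Numakura lemma it contains an idempotent $p$, which then satisfies $p\subseteq\calf$, and $A\supseteq N((x,y),U\times U)\in p$ gives $A\in p$.

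For $(2)\Rightarrow(3)$, given an idempotent $p\subseteq\calf$ with $A\in p$, work in the one-sided full shift $(\{0,1\}^{\bbn_0},\sigma)$. Let $\omega$ be the indicator function of $A$, put $x=\omega$, $y=p\omega$, and let $U=[1]$ be the cylinder on the $0$-th coordinate; then $N(x,U)=A$ by construction. Since $A\in p$ we get $y(0)=1$, so $y\in U$, and idempotency gives $p(\omega,y)=(p\omega,p(p\omega))=(y,y)$, i.e. $N((\omega,y),W\times W)\in p\subseteq\calf$ for every neighborhood $W$ of $y$; thus $\omega$ is $\calf$-strongly proximal to $y$, and passing to the orbit closure of $\omega$ yields the required system. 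The implication $(3)\Rightarrow(1)$ is immediate: if $A=N(x,U)$ with $U$ a neighborhood of $y$, then $N((x,y),U\times U)=N(x,U)\cap N(y,U)\subseteq A$, so $A$ is essential via $(X,T,x,y,U)$.

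For $(2)\Leftrightarrow(4)$: the direction $(4)\Rightarrow(2)$ runs exactly as in the second paragraph, now with $E=\bigcap_{n}\widehat{C_n}$; the hypothesis on $\{C_n\}$ makes $E$ a subsemigroup, since for $r\in C_n$ one chooses $m$ with $r+C_m\subseteq C_n$, so $-r+C_n\in q$ whenever $C_m\in q$, while each $C_n\in\calf$ keeps $E\cap\overline{\calf}$ nonempty, and the resulting idempotent $p\subseteq\calf$ has $A\supseteq C_1\in p$. For $(2)\Rightarrow(4)$, apply the Galvin--Glazer construction to $p$ and $A$ to extract a sequence $\langle y_t\rangle$ with $p\in\bigcap_n\overline{\mathrm{FS}(\langle y_t\rangle_{t\ge n})}$ and $\mathrm{FS}(\langle y_t\rangle_{t\ge1})\subseteq A$, and set $C_n=\mathrm{FS}(\langle y_t\rangle_{t\ge n})$; these are nested, contained in $A$, and lie in $p\subseteq\calf$, and if $r=\sum_{t\in F}y_t\in C_n$ then with $m=\max F+1$ any $s\in C_m$ has support disjoint from and above $F$, so $r+s\in C_n$ and $r+C_m\subseteq C_n$. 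The one genuinely delicate point, and the reason for the two hypotheses on $\calf$, is ensuring that the idempotent produced lies \emph{entirely} in $\calf$ rather than merely containing the relevant $\calf$-sets: this is what forces the intersection with $\overline{\calf}$, whose subsemigroup property rests on translation $+$ invariance and whose meeting every relevant clopen set rests on the Ramsey property. Everything else is routine once this is in place.
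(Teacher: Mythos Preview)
Your argument is correct. The cycle $(1)\Rightarrow(2)\Rightarrow(3)\Rightarrow(1)$ and the supplementary $(2)\Leftrightarrow(4)$ are all sound: the identification $E=\{q:qx=y\text{ and }qy=y\}$ really is a closed subsemigroup, the partition-regularity of $\calf$ guarantees that each clopen set $\widehat{N((x,y),V\times V)}$ meets $\overline{\calf}$, so compactness gives $E\cap\overline{\calf}\neq\emptyset$, and Ellis--Numakura then applies. The shift-space model for $(2)\Rightarrow(3)$ and the Galvin--Glazer construction for $(2)\Rightarrow(4)$ are standard and correctly invoked.

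Your route differs noticeably from the paper's. The paper defers the equivalences $(2)\Leftrightarrow(3)$ and $(3)\Leftrightarrow(4)$ entirely to results in \cite{Li2012}, and supplies only two short implications itself: the trivial $(3)\Rightarrow(1)$, and a direct dynamical proof of $(1)\Rightarrow(4)$ that takes a decreasing open neighborhood basis $\{U_n\}$ at $y$ and sets $C_n=N((x,y),U_n\times U_n)$, verifying the nesting condition by continuity of $T$ without ever touching $\beta\bbn$. Your proof, by contrast, is fully self-contained and runs everything through the algebra of $\beta\bbn$: the single mechanism ``intersect a natural closed subsemigroup with $\overline{\calf}$ and apply Ellis--Numakura'' handles both $(1)\Rightarrow(2)$ and $(4)\Rightarrow(2)$ uniformly, and it makes transparent exactly where each hypothesis on $\calf$ is spent. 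The paper's $(1)\Rightarrow(4)$ is arguably more elementary for that single step, but your approach has the virtue of not relying on external citations and of exhibiting the idempotent explicitly in every direction.
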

	\begin{proof}
		Since $\calf$ is a translation $+$ invariant Furstenberg family with the Ramsey property. 
		By Lemma 3.4 in {\cite{Li2012}}, we have $h(\calf)=\{p\in \beta\bbn \colon p\subset \calf\}$ is a closed left ideal, so $h(\calf)$ is a subsemigroup of $\beta \bbn$. 
		
		That  (2) and (3) are equivalent follows from Theorem 4.11 in {\cite{Li2012}}.  
		
		That (3) and (4) are equivalent follows from Proposition 4.13 in {\cite{Li2012}}.  
		
		(3) implies (1). Note that $A=N(x,U)\supset N((x,y), U\times U)$. By the definition of essential $\calf$-set, we have  $A$ is an essential $\calf$-set. 
		
		(1) implies (4).  
		Assume that $A$ is an essential $\calf$-set via $(X,T,x,y, U)$. 
		Let $\{U_n\}_{n=1}^\infty$ be a neighborhood basis at $y$ such that for each $n\in \bbn$, $U_n\supset U_{n+1}$ is an open subset of $U$.  
		Let $C_n=N((x,y), U_n\times U_n)$, for any $n\in\bbn$. 
		Then  $\{C_n\}_{n=1}^\infty$ is a decreasing sequence of subsets of $A$.  
		For any $n\in\bbn$, $C_n\in \calf$. For any $r\in C_n$, we have $T^r y\in U_n$. Then $T^{-r}U_n$ is an open neighborhood of $y$, there exists $m\in \bbn$ such that $U_m\subset T^{-r} U_n $. 
		For any $k\in C_m$, we have $T^k x\in U_m\subset T^{-r}U_n$ and $T^k y\in U_m\subset T^{-r}U_n$. 
		So $T^{r+k}x\in U_n$ and $T^{r+k}y\in U_n$, that is, $r+k\in C_n$. 
		So we have $r+C_m\subset C_n$.   
	\end{proof}
	
	\begin{coro}\label{cor,Ramsey}
		If $\calf$ is a translation $+$ invariant Furstenberg family with the Ramsey property. 
		Then $\ess(\calf)$ has the Ramsey property. 
	\end{coro}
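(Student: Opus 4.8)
The plan is to read off the Ramsey property of $\ess(\calf)$ directly from the algebraic characterization of essential $\calf$-sets supplied by Theorem~\ref{thm:ess-set-idempotent}. The hypotheses of the corollary---that $\calf$ is translation $+$ invariant, a Furstenberg family, and has the Ramsey property---are precisely those under which that theorem applies, so its equivalence (1)$\Leftrightarrow$(2) is available: a set is an essential $\calf$-set if and only if it belongs to some idempotent $p \subset \calf$ in $\beta\bbn$. The key observation is that any such $p$ is in particular an ultrafilter, and ultrafilters automatically split finite unions. Thus the Ramsey property of $\ess(\calf)$ should reduce to this elementary property of individual ultrafilters, with the translation invariance and Ramsey property of $\calf$ entering only through their role in validating Theorem~\ref{thm:ess-set-idempotent}.

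Concretely, I would take subsets $A, B \subset \bbn$ with $A \cup B \in \ess(\calf)$. By the implication (1)$\Rightarrow$(2) of Theorem~\ref{thm:ess-set-idempotent}, there is an idempotent $p \subset \calf$ with $A \cup B \in p$. Since $p \in \beta\bbn$ is an ultrafilter and $p$ is a filter, if neither $A$ nor $B$ lay in $p$ we would have $\bbn \setminus A \in p$ and $\bbn \setminus B \in p$, whence $\bbn \setminus (A \cup B) = (\bbn \setminus A) \cap (\bbn \setminus B) \in p$, contradicting $A \cup B \in p$. Hence $A \in p$ or $B \in p$. Assuming without loss of generality that $A \in p$, the implication (2)$\Rightarrow$(1) of Theorem~\ref{thm:ess-set-idempotent}, applied to the same idempotent $p \subset \calf$, yields that $A$ is an essential $\calf$-set. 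This establishes that $A \cup B \in \ess(\calf)$ forces $A \in \ess(\calf)$ or $B \in \ess(\calf)$, which is exactly the Ramsey property of $\ess(\calf)$.

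I do not expect a genuine obstacle here; the content is entirely front-loaded into Theorem~\ref{thm:ess-set-idempotent}, and what remains is the standard fact that ultrafilters distribute over unions. The only point requiring a moment's care is checking that the hypotheses on $\calf$ are strong enough to invoke the algebraic characterization in \emph{both} directions for the \emph{same} idempotent $p$---but since statement (2) of that theorem quantifies over a single idempotent $p \subset \calf$ and both the membership $A \cup B \in p$ and the passage back to $A \in p$ refer to that one $p$, no compatibility issue arises.
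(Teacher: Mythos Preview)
Your proposal is correct and follows essentially the same approach as the paper: invoke the equivalence (1)$\Leftrightarrow$(2) of Theorem~\ref{thm:ess-set-idempotent} and then use that every ultrafilter has the Ramsey property. The paper's proof is simply a more compressed version of what you wrote, stating in one sentence that ``each ultrafilter has the Ramsey property'' rather than spelling out the complement argument.
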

	\begin{proof} 
		By Theorem \ref{thm:ess-set-idempotent}, 
		for any subset $A$ of $\bbn$, $A$ is an essential $\calf$-set if and only if there exists an idempotent $p\subset \calf$ with $A\in p$. 
		Since each ultrafilter has the Ramsey property, so $\ess(\calf)$ has the Ramsey property. 
	\end{proof}

	\begin{proof}[Proof of Theorem~\ref{thm:main-result-2}] 
		Since $\calf$ is a translation $+$ invariant Furstenberg family with the Ramsey property, by Corollary \ref{cor,Ramsey}, $\ess(\calf)$ has the Ramsey property. 
		Since $\ess(\calf)\subset \calf$, 
		combining Proposition \ref{prop:iter-inv=>prox-lift}
		and Theorem \ref{thm:main-result-1}, 
		we have the statement (1) in Theorem \ref{thm:main-result-1}. 
		This proof is completed. 
	\end{proof}
	
	For $\calf = \calf_{\textrm{inf}}$, $\calf_{\textrm{ap}}$,
	$\calf_{\textrm{J}}$, $\calf_{\textrm{ps}}$, $\calf_{\textrm{pud}}$ or $\calf_{\textrm{pubd}}$, 
	we know that $\calf$ is a  translation $+$ invariant Furstenberg family with the Ramsey property. By Theorem \ref{thm:main-result-2}, we have the following consequence. 
	
	\begin{coro}\label{cor:ess-F-spectra-inv}
		For $\calf = \calf_{\textrm{inf}}$, $\calf_{\textrm{ap}}$,
		$\calf_{\textrm{J}}$, $\calf_{\textrm{ps}}$, $\calf_{\textrm{pud}}$ or $\calf_{\textrm{pubd}}$,
		and for any $\alpha>0$ and $0<\gamma<1$, $A\in \ess(\calf)$ implies
		$g_{\alpha,\gamma}(A)\in\ess(\calf)$. 
	\end{coro}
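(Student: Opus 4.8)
The plan is to apply Theorem~\ref{thm:main-result-2} directly to each of the six families, so that the entire task reduces to verifying, for each $\calf$, the two hypotheses of that theorem: that $\calf$ is a translation $+$ invariant Furstenberg family with the Ramsey property, and that $\calf$ is preserved by every spectrum $g_{\alpha,\gamma}$ at the level of $\calf$ itself (that is, $A\in\calf$ implies $g_{\alpha,\gamma}(A)\in\calf$). Both of these have in fact already been assembled in Section~2, so the work of the proof is simply to cite them in the right form and then invoke the theorem.

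First I would record that each of $\calf_{\textrm{inf}}$, $\calf_{\textrm{ap}}$, $\calf_{\textrm{J}}$, $\calf_{\textrm{ps}}$, $\calf_{\textrm{pud}}$ and $\calf_{\textrm{pubd}}$ is a translation $+$ invariant Furstenberg family and enjoys the Ramsey property. Being hereditary upward and translation $+$ invariant is immediate from the definitions, while the Ramsey property holds for elementary reasons for $\calf_{\textrm{inf}}$, $\calf_{\textrm{pud}}$ and $\calf_{\textrm{pubd}}$, by van der Waerden's theorem for $\calf_{\textrm{ap}}$, classically for $\calf_{\textrm{ps}}$ (see \cite[Theorem 1.24]{F81}), and by \cite[Theorem 2.14]{HS2010} for $\calf_{\textrm{J}}$. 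Next I would invoke the family-level preservation: for $\calf_{\textrm{inf}}$, $\calf_{\textrm{pud}}$ and $\calf_{\textrm{pubd}}$ the inclusion $g_{\alpha,\gamma}(A)\in\calf$ for $A\in\calf$ is elementary, and for $\calf_{\textrm{ps}}$, $\calf_{\textrm{ap}}$ and $\calf_{\textrm{J}}$ it is supplied by Theorems 2.4, 2.5 and 4.6 of \cite{Hindman2012}, respectively.

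With both hypotheses in hand for each of the six families, Theorem~\ref{thm:main-result-2} applies verbatim and yields $g_{\alpha,\gamma}(A)\in\ess(\calf)$ whenever $A\in\ess(\calf)$, which is exactly the assertion. I do not expect any genuine obstacle to remain at this stage: the only substantive inputs are the family-level preservation statements for the combinatorially rich families $\calf_{\textrm{ps}}$, $\calf_{\textrm{ap}}$ and $\calf_{\textrm{J}}$, imported from Hindman and Johnson, together with the lifting machinery of Theorem~\ref{thm:main-result-2} (whose proof rests on Propositions~\ref{prop:prox-lift=>iter-inv} and~\ref{prop:iter-inv=>prox-lift} and on Corollary~\ref{cor,Ramsey}). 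Once those are granted, the corollary is a clean specialization, amounting to running the list of six families through Theorem~\ref{thm:main-result-2} one at a time.
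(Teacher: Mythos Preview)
Your proposal is correct and follows exactly the same approach as the paper: the corollary is derived directly from Theorem~\ref{thm:main-result-2} after noting (as assembled in Section~2) that each of the six families is a translation $+$ invariant Furstenberg family with the Ramsey property and is preserved by every $g_{\alpha,\gamma}$ at the family level. The paper's proof is in fact just the one-sentence observation preceding the corollary, and your write-up is a slightly more detailed version of that same argument with the same citations.
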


	\begin{rem}
		According to Theorem~\ref{thm:ess-set-idempotent}, for  a subset $A$ of $\bbn$,
		\begin{enumerate}
			\item $A$ is an essential $\calf_{\textrm{inf}}$-set if and only if it is an IP-set;
			\item $A$ is an essential $\calf_{\textrm{J}}$-set if and only if it is a C-set;
			\item $A$ is an essential $\calf_{\textrm{pubd}}$-set if and only if it is a D-set;
			\item $A$ is an essential $\calf_{\textrm{ps}}$-set if and only if it is a quasi-central set.
		\end{enumerate}
		Hence, Corollary~\ref{cor:IP-D-quasi-central-spectra} follows from Corollary~\ref{cor:ess-F-spectra-inv}.
	\end{rem}
	
	For a Furstenberg family $\calf$, the dual family of $\calf$, denoted by $\calf^*$,
	is defined as the collection $\{B\subset\bbn\colon A\cap B\neq\emptyset,\ \forall A\in\calf\}$.
	The dual family of $\ess(\calf)$ is denoted by $\ess^*(\calf)$.
	Using an elementary approach, Bergelson, Hindman and Kra proved the following result.
	\begin{thm}[{\cite[Theorem 2.5]{Bergelson1996}}] \label{thm:dual-prop}
		Let $\calf$ be a Furstenberg family with the Ramsey property.
		Assume that 
		\begin{enumerate}
			\item for each $A\in \calf$ there exist $x,y\in A$ with $x+y\in A$; 
			\item for any $\alpha>0$ and $0<\gamma<1$, $A\in\calf$ implies $g_{\alpha,\gamma}(A)\in\calf$. 
		\end{enumerate}
		Then for any $\alpha>0$ and $0<\gamma<1$, $A\in\calf^*$ implies $g_{\alpha,\gamma}(A)\in\calf^*$.
	\end{thm}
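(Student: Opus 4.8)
The plan is to reduce the dual statement to a containment fact about preimages under the spectrum. Writing $g=g_{\alpha,\gamma}$, I first observe that $g(B)\in\calf^*$ means exactly that $g(B)$ meets every $A\in\calf$, and that $g(B)\cap A\neq\emptyset$ is equivalent to $B\cap g^{-1}(A)\neq\emptyset$, where $g^{-1}(A)=\{n\in\bbn\colon g(n)\in A\}$. Since $B\in\calf^*$ meets every member of $\calf$, it therefore suffices to prove the single auxiliary implication
\[
A\in\calf \implies g^{-1}(A)\in\calf .
\]
Once this is available, for each $A\in\calf$ we get $g^{-1}(A)\in\calf$, hence $B\cap g^{-1}(A)\neq\emptyset$, which yields some $b\in B$ with $g(b)\in A$; as $A$ was arbitrary, $g(B)\in\calf^*$. (Equivalently, by contradiction: if $g(B)\cap A=\emptyset$ for some $A\in\calf$, then $B\subseteq\bbn\setminus g^{-1}(A)$, so $\bbn\setminus g^{-1}(A)\in\calf^*$; but this set is disjoint from $g^{-1}(A)\in\calf$, contradicting $g^{-1}(A)\in\calf^*$'s defining property.)

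To prove the auxiliary implication I would invert $g$ by a second spectrum. Put $\beta=\alpha^{-1}$ and $h=g_{\beta,1/2}$, so $h(a)=\lfloor a\beta+\tfrac12\rfloor$ is the nearest integer to $a\beta$ whenever $\{a\beta\}\neq\tfrac12$. Using $\alpha\beta=1$, a direct estimate shows $g(h(a))=a$ as soon as $\{a\beta\}$ is close enough to $0$ modulo $1$: if $|a\beta-h(a)|<\min\{\gamma,1-\gamma\}\beta$, then
\[
h(a)\alpha+\gamma=a+\gamma+\big(h(a)-a\beta\big)\alpha\in[a,a+1),
\]
whence $\lfloor h(a)\alpha+\gamma\rfloor=a$. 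Thus, if I can pass to a subset $A'\subseteq A$ with $A'\in\calf$ on which $\{a\beta\}$ is uniformly near $0\pmod 1$, then $h(A')\subseteq g^{-1}(A)$, and $h(A')\in\calf$ by hypothesis (2) applied to the spectrum $g_{\beta,1/2}$; since $\calf$ is hereditary upwards this gives $g^{-1}(A)\in\calf$.

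The device that produces such an $A'$, and the heart of the argument, is the interaction of the Ramsey property with hypothesis (1). Fix an integer $N>\alpha/\min\{\gamma,1-\gamma\}$ and partition $A$ into the finitely many classes $A_j=\{a\in A\colon \{a\beta\}\in[\tfrac{j}{N},\tfrac{j+1}{N})\}$, $0\le j\le N-1$. By the Ramsey property some $A_j$ lies in $\calf$. Hypothesis (1), applied to this $A_j$, furnishes $x,y\in A_j$ with $x+y\in A_j$; setting $u_x=\{x\beta\}$, $u_y=\{y\beta\}$ and $u_{x+y}=\{(x+y)\beta\}=\{u_x+u_y\}$, all three fractional parts lie in the length-$1/N$ interval $[\tfrac jN,\tfrac{j+1}N)$. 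Comparing $u_{x+y}$ with $u_x+u_y$ forces $j\in\{0,N-1\}$: in the non-wrapping case $u_{x+y}=u_x+u_y\ge \tfrac{2j}{N}$, which meets the upper end $\tfrac{j+1}{N}$ of the interval unless $j=0$; in the wrapping case $u_{x+y}=u_x+u_y-1$ can remain in the interval only when $j=N-1$. Hence every $a\in A_j$ satisfies $\{a\beta\}\in[0,\tfrac1N)\cup[1-\tfrac1N,1)$, so $h(a)$ is the nearest integer to $a\beta$ with $|a\beta-h(a)|<\tfrac1N<\min\{\gamma,1-\gamma\}\beta$, and the estimate of the previous paragraph applies with $A'=A_j$.

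I expect this last step to be the main obstacle: the additive closure in hypothesis (1) is exactly what excludes the ``middle'' classes and pins the fractional parts $\{a\beta\}$ to a neighbourhood of $0$, which is precisely the regime in which $g_{\beta,1/2}$ inverts $g$; this is why (1) cannot be dispensed with. By contrast, the floor/fractional-part bookkeeping and the boundary cases ($\{a\beta\}=\tfrac12$, or $h(a)=0$ absorbed by the convention that zeros are discarded) should be routine, and the case $\alpha\le1$ is easier still, since then $\beta\ge1$ makes the admissible range for $\{a\beta\}$ cover all of $[0,1)$ and no appeal to (1) is needed.
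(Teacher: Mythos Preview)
The paper does not supply its own proof of this theorem; it is quoted from \cite[Theorem~2.5]{Bergelson1996} and attributed to the ``elementary approach'' there, with no argument reproduced. So there is nothing in the paper to compare against directly.

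Your proposal is correct and is essentially the standard elementary argument. The reduction of $g(B)\in\calf^*$ to the preimage statement $A\in\calf\Rightarrow g^{-1}(A)\in\calf$ is clean, and realising a large piece of $g^{-1}(A)$ as $h(A')$ for the inverse spectrum $h=g_{\alpha^{-1},1/2}$ is exactly the right device. The heart of the matter --- partitioning $A$ by the value of $\{a\beta\}$ into $N$ classes, using the Ramsey property to select $A_j\in\calf$, and then invoking hypothesis~(1) on $A_j$ to force $j\in\{0,N-1\}$ --- is handled correctly: your case split (non-wrap gives $2j/N\le u_{x+y}<(j+1)/N$, hence $j=0$; wrap gives $j/N\le u_{x+y}<2(j+1)/N-1$, hence $j=N-1$) is valid. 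Once $\{a\beta\}$ is within $1/N$ of $0\pmod 1$, the identity $g(h(a))=a$ follows from your displayed estimate, and then $h(A_j)\subseteq g^{-1}(A)$ with $h(A_j)\in\calf$ by hypothesis~(2).

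Two small inaccuracies, neither of which damages the proof: in the $j=N-1$ class you only obtain $|a\beta-h(a)|\le 1/N$, not strict inequality, but this is harmless because the choice $N>\alpha/\min\{\gamma,1-\gamma\}$ makes $1/N<\min\{\gamma,1-\gamma\}\beta$ strict; and your closing remark that ``$\alpha\le 1$'' already suffices to avoid hypothesis~(1) is off --- the correct threshold is $\alpha<2\min\{\gamma,1-\gamma\}$, since what is needed is $\min\{\gamma,1-\gamma\}\beta>\tfrac12$.
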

	
	By Lemma~\ref{lem,A-A}, the condition (1) of Theorem~\ref{thm:dual-prop} is fulfilled for any collection of essential $\calf$-sets. So we have the following consequence.
	
	\begin{coro}\label{cor:duall-ess-F-spectra-inv}
		For $\calf = \calf_{\textrm{inf}}$, $\calf_{\textrm{ap}}$,
		$\calf_{\textrm{J}}$, $\calf_{\textrm{ps}}$, $\calf_{\textrm{pud}}$ or $\calf_{\textrm{pubd}}$,
		and for any $\alpha>0$ and $0<\gamma<1$, $A\in \ess^*(\calf)$ implies
		$g_{\alpha,\gamma}(A)\in\ess^*(\calf)$. 
	\end{coro}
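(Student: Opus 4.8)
The plan is to deduce this from Theorem~\ref{thm:dual-prop} by applying that theorem not to $\calf$ itself but to the derived family $\ess(\calf)$ of essential $\calf$-sets. Since $\ess^*(\calf)$ is by definition the dual family of $\ess(\calf)$, the conclusion of Theorem~\ref{thm:dual-prop}, read with $\ess(\calf)$ playing the role of the ambient family, says exactly that $g_{\alpha,\gamma}$ preserves $\ess^*(\calf)$, which is what we want. So the whole task reduces to checking that $\ess(\calf)$ satisfies the hypotheses of Theorem~\ref{thm:dual-prop}.

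Two of the three hypotheses are already available. First, for each of the listed families, $\calf$ is a translation $+$ invariant Furstenberg family with the Ramsey property; hence $\ess(\calf)$ is again a Furstenberg family (it is hereditary upwards, as noted just after the definition of essential $\calf$-set), and it has the Ramsey property by Corollary~\ref{cor,Ramsey}. This supplies the standing assumption of Theorem~\ref{thm:dual-prop}. Second, condition (2) of that theorem, namely that $A\in\ess(\calf)$ implies $g_{\alpha,\gamma}(A)\in\ess(\calf)$ for all $\alpha>0$ and $0<\gamma<1$, is precisely the content of Corollary~\ref{cor:ess-F-spectra-inv}.

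It remains to verify condition (1): every $A\in\ess(\calf)$ contains $x,y$ with $x+y\in A$. This is where Lemma~\ref{lem,A-A} does the work. Given $A\in\ess(\calf)$, Lemma~\ref{lem,A-A} guarantees that $(A-A)\cap A$ is an essential $\calf$-set, in particular nonempty; choosing an element $m$ of it, we may write $m=a-b$ with $a,b\in A$, so that $b+m=a\in A$ while $b,m\in A$, which gives the required pair. With all three hypotheses in hand, Theorem~\ref{thm:dual-prop} applied to $\ess(\calf)$ yields the corollary. I do not expect any genuine obstacle here: the nontrivial input (preservation of $\ess(\calf)$ itself under spectra) has already been isolated in Corollary~\ref{cor:ess-F-spectra-inv}, and the only point still requiring a short argument is the additivity condition, which is exactly what Lemma~\ref{lem,A-A} was designed to furnish.
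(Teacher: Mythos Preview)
Your proposal is correct and follows exactly the approach the paper takes: apply Theorem~\ref{thm:dual-prop} with $\ess(\calf)$ in the role of the ambient family, invoking Corollary~\ref{cor,Ramsey} for the Ramsey property, Corollary~\ref{cor:ess-F-spectra-inv} for condition~(2), and Lemma~\ref{lem,A-A} for condition~(1). You have simply spelled out the verification of condition~(1) in slightly more detail than the paper does.
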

	
	%Let $\calf_{\textrm{ip}}$ be the collection of IP-sets.  

	\subsection*{Acknowledgements}
	The authors would like to thank Dr.\@ Leiye Xu for some helpful discussions. 
	We also express many thanks to the anonymous referee, whose comments have substantially improved this paper. 
	The authors were partially supported by NSF of China (12171298, 12222110)
	and NSF of Guangdong Province (2018B030306024).

	\normalsize

\end{document}